\theoremstyle{plain}
  \newtheorem{thm}{Theorem}
  \newtheorem{lemma}[thm]{Lemma}
\theoremstyle{definition}
\tikzset{ext/.style={circle, draw,inner sep=1pt},int/.style={circle,draw,fill,inner sep=1pt},nil/.style={inner sep=1pt}}
\tikzset{exte/.style={circle, draw,inner sep=3pt},inte/.style={circle,draw,fill,inner sep=3pt}}
\tikzset{diagram/.style={matrix of math nodes, row sep=3em, column sep=2.5em, text height=1.5ex, text depth=0.25ex}}
\tikzset{diagram2/.style={matrix of math nodes, row sep=0.5em, column sep=0.5em, text height=1.5ex, text depth=0.25ex}}
\newcommand{\co}[2]{\left[{#1},{#2}\right]} % commutator
\newcommand{\op}{\mathcal}
\newcommand{\bpm}{\begin{pmatrix}}
\newcommand{\epm}{\end{pmatrix}}
\DeclareMathOperator{\Hom}{Hom}
\DeclareMathOperator{\Def}{Def}
\DeclareMathOperator{\id}{id}
\newcommand{\Der}{\mathrm{Der}}
\newcommand{\beq}[1]{\begin{equation}\label{#1} 
}
\newcommand{\eeq}{\end{equation}}
\newcommand{\stadpole}{
\begin{tikzpicture}[baseline=-0.5ex]
\node[ext](v) at(0,0) {};
\draw (v) to [ out=120, in=60, loop] (v);
\end{tikzpicture}}
\newcommand{\dtadpole}{
\begin{tikzpicture}[baseline=-0.5ex]
\node[ext](v) at(0,0) {};
\draw[dashed] (v) to [out=-120, in=-60, loop] (v);
\end{tikzpicture}}
\newcommand{\sedge}{
\begin{tikzpicture}[baseline=-0.5ex]
\node[ext](v) at(0,0) {};
\node[ext](w) at(.5,0) {};
\draw (v) edge [out=45, in=135] (w);
\end{tikzpicture}}
\newcommand{\dedge}{
 \begin{tikzpicture}[baseline=-0.5ex]
\node[ext](v) at(0,0) {};
\node[ext](w) at(.5,0) {};
\draw (v) edge [out=-45, in=-135, dashed] (w);
\end{tikzpicture}}
\newcommand{\extnode}{
\begin{tikzpicture}[baseline=-0.5ex]
\node[ext](v) at(0,0) {};
\end{tikzpicture}}
\newcommand{\intext}{
\begin{tikzpicture}[baseline=-0.5ex]
\node[ext](v) at(0,-0.25) {};
\node[int](w) at(0,.25) {};
\draw (v) edge [] (w);
\end{tikzpicture}}
\newcommand{\intint}{
\begin{tikzpicture}[baseline=-0.5ex]
\node[int](v) at(0,-0.25) {};
\node[int](w) at(0,.25) {};
\draw (v) edge [] (w);
\end{tikzpicture}}
\begin{document}

\title{Homotopy derivations of the framed little discs operads}

\author[S. Brun]{Simon~Brun}
\address{Department of Mathematics, ETH Zurich, Zurich, Switzerland}
\email{simonbrun(at)math.ethz.ch}

\subjclass[2000]{16E45; 53D55; 53C15; 18G55}
\keywords{}

\begin{abstract}

We study the homotopy derivations of the framed little discs operads, which correspond to the homotopy derivations of the $BV_{2n}$ operads. By extending a result by Willwacher about the homotopy derivations of the $e_n$ operads we show that the homotopy derivations of the $BV_{2n}$ operads may be described through the cohomology of a suitable graph complex. We will present an explicit quasi-isomorphic map.
\end{abstract}

\maketitle

\tableofcontents

\section{Introduction}
\label{sec:introduction}

The little n-discs operad $D_n$ is an operad in the category of topological spaces. Its arity k operations $D_n(k)$ correspond to the space of embeddings of $k$ copies of the unit n-disc to itself, $\prod_k D_n \rightarrow D_n$, in such a way that the embedding maps are a composition of translations and dilations. The framed little n-discs operad $fD_n$ allows additionally for rotations in the embeddings. Precisely, the n-discs operad $D_n$ has a left action of $SO(n)$ by rotating the little n-discs around their center. Then, the framed little n-discs operad $fD_n$ is a topological operad defined as semidirect product 
\[
fD_n = D_n \rtimes SO(n)
\]
with the operations in arity k given by
\[
fD_n(k) = D_n(k) \rtimes SO(n)^k
\]
as described in Definition 2.1 of \cite{SalvatoreWahl03}. Its operadic composition 
\[
\gamma_{fD_n}: (D_n \rtimes SO(n))(k)\times((D_n \rtimes SO(n))(n_1)\times\cdots\times(D_n \rtimes SO(n))(n_k))\rightarrow (D_n \rtimes SO(n))(n_1+\cdots + n_k)
\]
is defined as
\[
\gamma_{fD_n} ((a, \mathbf{g}),((b_1, \mathbf{h}^1),\dots,(b_k,\mathbf{h}^k))) := (\gamma_{D_n} (a, (g_1 b_1,\dots, g_k b_k)), g_1\mathbf{h}^1,\dots, g_k\mathbf{h}^k)
 \]
where $a,b_1,\dots, b_k\in D_n$ and $\mathbf{g},\mathbf{h}^1,\dots,\mathbf{h}^k \in SO(n)$.

While the homology of the little n-discs operad corresponds to the Gerstenhaber operad $e_n$
\[
H(D_{n}) \cong e_{n},
\]
the homology of the framed little 2n-discs operad is described by the $BV_{2n}$ operad, precisely,
\[
H(fD_{2n}) \cong BV_{2n} \rtimes H(SO(2n-1)),
\]
as stated in theorem 5.4 of \cite{SalvatoreWahl03}.

Batalin-Vilkovisky algebras, i.e. algebras over the $BV_2$ operad, have their origin in Physics and were mathematically introduced by Getzler in \cite{Getzler94}. 

Kontsevich \cite{Kontsevich1999} as well as Lambrechts and Volić \cite{LambrechtsVolic2014} showed that the little n-discs operad $D_n$ is formal. Ševera \cite{Severa10} as well as Giansiracusa and Salvatore \cite{GiansiracusaSalvatore10} extended this proof to the framed little 2-discs operad $fD_2$, i.e. there exists a zig-zag of quasi-isopmorhisms of of dg operads between the operad of rational singular chains of the framed little 2-discs operad and its homology, which is isomorphic to the $BV_2$ operad:
\[
C_{\bullet}(fD_2) \quad \tilde{\leftarrow} \quad . \quad \tilde{\rightarrow} \quad H_{\bullet}(fD_2) \cong BV_2,
\]
\cite[Theorem 10]{CarrilloTonksVallette12}.

Gálvez-Carrillo, Tonks and Vallette \cite{CarrilloTonksVallette12} extended the theory about Koszul dual operads of quadratic operads to inhomogeneous quadratic operads, as for example the $BV_{2n}$ operad. We will describe the Koszul dual operad $BV_{2n}^{!}$ in section \ref{bv_koszul}. The cobar construction of the Koszul dual cooperad $BV_{2n,\infty}:=\Omega BV_{2n}^{\text{!`}}$ provides quasi-free, but not minimal, resolution of the $BV_{2n}$ operad as stated in theorem 6 of \cite{CarrilloTonksVallette12}:
\[
BV_{2n,\infty}:=\Omega BV_{2n}^{\text{!`}}  \quad \tilde{\rightarrow} \quad BV_{2n}.
\]
Furthermore there is a quasi-isomorphism of dg operads 
\[
\Omega BV_{2}^{\text{!`}}  \quad \tilde{\rightarrow} \quad C_{\bullet}(fD_2)
\]
which lifts the resolution 
\[
BV_{2,\infty} \quad \tilde{\rightarrow} \quad BV_{2},
\]
\cite[Theorem 11 and 13]{CarrilloTonksVallette12}.

Hence, we can deduce important applications for homotopy $BV_2$ algebras, i.e. algebras over $BV_{2,\infty}$, as pointed out in corollary 12 and 14 of \cite{CarrilloTonksVallette12}: Any topological conformal field theory carries a homotopy $BV_2$ algebra structure. The same is true for the singular chain complex of the double loop space of a topological space endowed with an action of the circle. 

In \cite{Willwacher2015} Willwacher discribes the homotopy derivations of the $e_n$ operads governing n-algebras through the cohomology of Kontsevich's graph complex $GC_n$. Precisely, theorem 1.3 in \cite{Willwacher2015} states that 
\[
H(Der(e_{n,\infty})) \cong S^+\left(H(GC_{n,conn}^{\geq2})[-n-1]\oplus\mathbb{R}[-n-1]\right)[n+1].
\]
where $GC_{2n,conn}^{\geq2}$ denotes the connected graphs of Kontsevich's graph complex with at least bivalent vertices.

Expanding this result to the $BV_{2n}$ operads, we show that the homotopy derivations of the $BV_{2n}$ operads are quasi-isomorphic to the homology of a suitable graph complex. Precisely, in theorem \ref{quasiiso} and equation \ref{coho_hom_der_bv} we show, that 
\[
H(Der(BV_{2n,\infty})) \cong S^{+}_{\mathbb{R}[[u]]}\left(\left(H\left(GC_{2n,conn}^{\geq2}\right)[-2n-1] \oplus \mathbb{R}[-2n-1]\right)\left[[u]\right]\right)[2n+1],
\]
where $u$ is an even variable emerging in the Koszul dual operad $BV_{2n}^{!}$, since $BV_{2n}$ is an inhomogeneous quadratic operad. The differential in the considered graph complex has a first order contribution in $u$ additional to the vertex splitting differential inherent to Kontsevich's graph complex. 

Furthermore, we will present an explicit combinatorial map and prove that it is a quasi-isomorphism.

In \cite{Willwacher2015} Willwacher proved that the zeroth cohomology of the homotopy derivations of the $e_2$ operad is isomorphic to the Grothendieck-Teichm\"uller Lie algebra plus one class. As a corollary we extend this fact to the cohomology of the homotopy derivations of the $BV_2$ opeard in theorem \ref{zerocoho}:
\[
H^0(Der(BV_{2,\infty}))\cong  \mathfrak{grt} := \mathfrak{grt_1}\rtimes\mathbb{R}.
\]

\section{Preliminaries and basic notation}
\subsection{General notation}
In this paper we always work over the ground field $\mathbb{R}$. The degree of an element $x$ of a graded or differential graded (dg) vector space $V$ will be denoted by $|x|$ and the the r-fold desuspension by $V[r]$. For dg vector spaces we use cohomological convention, i.e. all differentials have degree one. Furthermore, we will always use a lexicographic ordering of odd objects of graded vector spaces, i.e. odd components of objects are ordered according to the appearance of the object in the formula from left to right.

We denote the completed symmetric product space of a vector space $V$ by
\[
S(V) = \mathbb{R} \oplus S^+(V) = \mathbb{R} \oplus \prod_{j\geq1}\left(V^{\otimes j}\right)^{\mathbb{S}_j}
\]
where the symmetric group $\mathbb{S}_n$ acts by permutations of the factors. 

We will consider the tensor coalgebra $S^+(V)$ equipped with the deconcatenation coproduct $\Delta: S^+(V)\rightarrow S^+(V)\otimes S^+(V)$ given by
\begin{equation}
\Delta (v_1\cdots v_n) := \sum_{i=1}^{n-1}v_1\cdots v_{i}\otimes v_{i+1}\cdots v_{n}.
\label{deconcatenation}
\end{equation}

Concerning $\mathbb{S}$-modules as well as operads we use the conventions from the textbook \cite{LodayVallette2012} by Loday and Vallette. 
We denote the $n$-ary operations of an operad $\op P$ by $\op P(n)$. The operadic $r$-fold desuspension $\op P\{r\}$ is an operad with 
\[
\op P \{r\}(n)=\op P(n) \otimes sgn_n^{\otimes r}\left[(n-1)r\right]
\]
where $sgn_n$ is the sign representation of the symmetric group $\mathbb{S}_n$.

As defined in section 5.1.4 of \cite{LodayVallette2012} the composition of two $\mathbb{S}$-modules $M$ and $N$ is given as 
\begin{equation}
M\circ N=\bigoplus_{k\geq0} M(k)\otimes_{\mathbb{S}_k}N^{\otimes k}.
\label{comp_smodules}
\end{equation}
Furthermore, corollary 5.1.4 of \cite{LodayVallette2012} states that in arity $n$ the composition is given by
\[
(M\circ N)(n)=\bigoplus_{k\geq0} M(k)\otimes_{\mathbb{S}_k}\left(\bigoplus Ind~^{\mathbb{S}_n}_{\mathbb{S}_{i_1}\times\cdots\times\mathbb{S}_{i_1}}\left(N(i_1)\otimes\cdots\otimes N(i_k)\right)\right)
\]
where the sum extends over all the nonegative $k$-tupels ${(i_1,\dots,i_k)}$ with $i_1+\cdots+i_k=n$. Hence, the space $(M\circ N)(n)$ is spanned by the equivalence classes of the elements 
\[
(\mu;\nu_1,\dots\nu_k;\sigma)
\]
where $\mu \in M(k), \nu_1\in N(i_1),\dots,\nu_k \in N(i_k),\sigma \in Sh(i_1,\dots,i_k)$, as described in section 5.1.7 of \cite{LodayVallette2012}.

Let $\op P$ be a symmetric operad with composition map
\[
\gamma: \op P \circ \op P \rightarrow \op P
\] 
constituted by the linear maps
\[
\gamma(i_1,\dots,i_k): \op P(k) \otimes \op P(i_1)\otimes \cdots \otimes \op P(i_k) \rightarrow \op P (i_1+\cdots + i_k).
\]
Furthermore, let $\mu \in \op P(m)$ and $\nu \in \op P(n)$ be two operations. The partial composition $(\mu,\nu)\mapsto \mu \circ_i \nu \in \op P (m-1+n)$ is defined by
\[
\mu \circ_i \nu := \gamma(\mu;id,\dots,,id,\nu,id,\dots,id).
\]

Additionally to the composition of $\mathbb{S}$-modules we use $\circ$ also to denote the composition of maps. It will be clear from the context, which notation is meant. 

\subsection{BV operad and its Koszul dual operad}
\label{bv_koszul}
In order to study the homotopy derivations of the $BV_{2n}$ operad we will consider its cofibrant resolution, obtained by the cobar construction for the Koszul dual cooperad according to the Koszul duality theory for inhomogeneous quadratic operads  introduced by Gálvez-Carrillo, Tonks and Vallette in \cite{CarrilloTonksVallette12}. Therefore, let us review the Kozul dual operad of the $BV_{2n}$ operad, as described in the sections 13.7.4 and 7.8.7 of \cite{LodayVallette2012}.

Let $\op P$ be an operad, we denote its Koszul dual cooperad by $\op P^{\text{!`}}$ and its Koszul dual operad by  $\op P^{!}$. The $\op BV_{2n}$ operad has a representation as an inhomogeneous quadratic operad with the three generators $m=\cdot\wedge\cdot$ and $c=[\cdot,\cdot]$ in arity two as well as $\Delta$ in arity one. $\mathbb{R}m$ is a trivial representation of $\mathbb{S}_2$ in degree $0$, $\mathbb{R}c$ is a trivial representation of $\mathbb{S}_2$ in degree $2n-1$, and  $\mathbb{R}\Delta$ is a one dimensional graded vector space in degree $2n-1$. 

As $\mathbb{S}$-module the $\op BV_{2n}$ operad is isomorphic to
\[
\op BV_{2n} \cong e_{2n}  \circ T(\Delta)/ (\Delta^2) \cong Com \circ \op Lie\{-2n+1\} \circ T(\Delta)/ (\Delta^2)
\]
where $\circ$ refers to the composition of $\mathbb{S}$-modules as defined in equation \ref{comp_smodules} and $e_{n}$ is the operad that governs $e_{n}$-algebras.

Since the $BV_{2n}$ operad is an inhomogeneous quadratic operad its koszul dual operad is a qudratic operad with a differential $d_{BV}$
\[
BV_{2n}^{!}=\left(qBV_{2n}^{!}, d_{BV}\right)
\]
where the quadratic operad is isomorphic as $\mathbb{S}$-module to
\[
qBV_{2n}^{!}\cong S(u) \circ  e_{2n}\{-2n\}\cong S(u)\circ \op Com\{-2n\} \circ \op Lie\{-1\}
\]
with $|u|=2n$.
Let $u^d L_1\wedge \dots \wedge L_N$ denote a homogeneous element of $BV_{2n}^{!}$ with $L_i$ being a $\op Lie$ word, then the $BV$ differential reads as
\begin{equation}
d_{BV}\left(u^d L_1\wedge \dots \wedge L_N\right) = \sum_{1\leq i<j\leq N} \pm u^{(d+1)} [L_i,L_j]\wedge L_1\wedge \dots \wedge \hat{L}_i \wedge \dots \wedge \hat{L}_j \wedge \dots \wedge L_N
\label{d_bv}
\end{equation}
Here, $\hat{L}_i$ indicates that $L_i$ is missing. 

\subsection{Convolution dg Lie algebra and deformation complex}
\label{conv_lie_alg}
The homotopy derivations of the $BV_{2n}$ operad will be identified with a convolution dg Lie algebra. Concerning convolution dg Lie algebras we refer to section 6.4 of \cite{LodayVallette2012}.

Let $\op C$ be a coaugmented cooperad with $\op C(1)$ one dimensional, $\op C(0)=0$ and $\op P$ an augmented operad, we denote the convolution dg Lie algebra by 
\[
\Hom_{\mathbb{S}}(\op C, \op P) = \prod_{N\geq 1} \Hom_{\mathbb{S}_N}(\op C(N), \op P(N)).
\] 
$\Omega(\op C)$ will denote the quasi free opearad obtained by the cobar construction. A homomorphism of dg operads $\alpha: \Omega(\op C)\to \op P$ determines a Maurer-Cartan element in the convolution dg Lie algebra, also denoted by $\alpha$. We twist by this Maurer-Cartan element to obtain a Lie algebra
\[
\Def(\Omega(\op C)\stackrel{\alpha}{\to}  \op P)
\] 
and call it deformation complex of the map $\alpha$.

The convolution dg Lie algebra $\Hom_{\mathbb{S}}(\op C, \op P)$ is isomorphic to 
\begin{equation}
\Hom_{\mathbb{S}}(\op C, \op P)\cong  \op C^* \hat\otimes_{\mathbb{S}} \op P := \prod_{N\geq 1} \op C^*(N)  \hat\otimes_{\mathbb{S_N}} \op P(N)
\label{convolutionLieAlg}
\end{equation}
where the completion of the tensor product is with respect to the cohomological filtration.

Let $f=f_1\otimes f_2$ and $g=g_1\otimes g_2$ be two elements of the convolution dg Lie algebra in the form (\ref{convolutionLieAlg}), then the pre-Lie bracket is given by
\[
\{f,g\} = \{f_1\otimes f_2, g_1\otimes g_2\} =\sum_i (-1)^{|f_2||g_1|}(f_1\circ_i g_1) \otimes (f_2\circ_i g_2) 
\]
and the corresponding Lie bracket reads as
\begin{equation}
[f,g] = \{f,g\} - (-1)^{|f||g|}\{g,f\}.
\label{lie_b_conv}
\end{equation}
Here, the composition $\circ_i$ refers to the partial operadic composition of the operad $\op C^*$ and $\op P$ respectively.

\subsection{Graph complexes}
We will consider different kinds of graph complexes. First we consider M. Kontsevich's graph complex $GC_{2n}$ as defined in section 3 of \cite{Willwacher2015}. Elements of this graded vector space are depicted as undirected graphs with one kind of unlabelled black vertices of valence greater or equal three and with no tadpols. Edges have an odd degree of $1-2n$, i.e. one needs to choose an ordering of the edges. In order to obtain an element of $GC_{2n}$ one has to sum over all possible ways of assigning labels to the vertices and divide by the order of the symmetry group of the graph. The vertices of graphs in $GC_{2n}$ have to be at least trivalent. If we also allow bivalent vertices we denote the corresponding graph complex by $GC_{2n}^{\geq 2}$. Furthermore, we consider only graphs without tadpoles, loops with one vertex, as depicted in the following picture.
\[
\begin{tikzpicture}[scale=1,
helper/.style={outer sep=0,inner sep=0,minimum size=5,shape=coordinate},
default edge/.style={draw},
 every loop/.style={out=140, in=50, looseness=.8, distance=.8cm }]
 
\node (v0) at (0.600000023841858,0.799999952316284) [int] {};
\node (v2) at (0.600000023841858,0.399999976158142) [helper] {};
\node (v3) at (0.400000005960464,0.399999976158142) [helper] {};
\node (v4) at (0.800000011920929,0.399999976158142) [helper] {};

\draw[default edge] (v0)--(v3);
\draw[default edge] (v0)--(v2);
\draw[default edge] (v0)--(v4);
\draw (v0) to [out=70,in=110,loop] ();
\end{tikzpicture}
\]

We define the operation $\Gamma_1 \bullet \Gamma_2$, which means that we sum over all vertices of $\Gamma_1$, each time insert $\Gamma_2$ in the chosen vertex of $\Gamma_1$ and sum over all possible ways to reconnect the incident edges of the chosen vertex of $\Gamma_1$ to one of the vertices of $\Gamma_2$. 

Let $\Gamma$ be a graph in $GC_{2n}$. We introduce the following notation. 
\begin{equation}
\begin{tikzpicture}[baseline=-0.65ex]
\node[draw, dashed, circle] (G) at (0,0) {$\Gamma$};
\node[](v4) at(0,-1) {};
\draw (G) edge [] (v4);
\end{tikzpicture}\\
\label{add_edge}
\end{equation}
We sum over all vertices of $\Gamma$ and connect for each term the incident edge to the corresponding vertex. If there are several incident edges, we sum over all possible ways to connect the incident edges to vertices of $\Gamma$.
\[
\begin{tikzpicture}[baseline=-0.65ex]
\node[draw, dashed, circle] (G) at (0,0) {$\Gamma$};
\node[](v1) at(1,0) {};
\draw (G) edge [] (v1);
\node[](v2) at(0,1) {};
\draw (G) edge [] (v2);
\node[](v3) at(-1,0) {};
\draw (G) edge [] (v3);
\node[](v4) at(0,-1) {};
\draw (G) edge [] (v4);
\end{tikzpicture}\\
\]

The following picture shows an example.

\[
\begin{tikzpicture}[baseline=-0.5ex]
\draw (30:0.5) node [int] (v18) {} -- (150:0.5) node [int] (v21) {} -- (-90:0.5) node [int] (v19) {} -- (0:0) node [int] (v20) {} -- (v18) -- (v19);
\draw  (v20) edge (v21);
\end{tikzpicture}
\bullet
\begin{tikzpicture}[baseline=-0.5ex]
\draw (90:0.5) node [int] (v22) {} -- (162:0.5) node [int] (v26) {} -- (-126:0.5) node [int] (v24) {} -- (-54:0.5) node [int] (v1) {} -- (18:0.5) node [int] (v23) {} -- (0:0) node [int] (v25) {} -- (v22) -- (v23);
\draw (v24) -- (v25) -- (v26);
\draw (v1) -- (v25);
\end{tikzpicture}
= 4
\begin{tikzpicture}[baseline=-0.5ex]
\node[draw, dashed, circle, minimum size=1.5cm] (G) at (-90:1.3) {};
\draw (30:0.5) node [int] (v18) {} -- (150:0.5) node [int] (v21) {} -- (G) -- (0:0) node [int] (v20) {} -- (v18) -- (G);
\draw  (v20) -- (v21);
\begin{scope}[shift={(0,-1.3)}]
\draw (90:0.5) node [int] (v22) {} -- (162:0.5) node [int] (v26) {} -- (-126:0.5) node [int] (v24) {} -- (-54:0.5) node [int] (v1) {} -- (18:0.5) node [int] (v23) {} -- (0:0) node [int] (v25) {} -- (v22) -- (v23);
\draw (v24) -- (v25) -- (v26);
\draw (v1) -- (v25);
\end{scope}
\end{tikzpicture}
\]

The Lie bracket on $GC_{2n}$ can then be described in the following way
\[
[\Gamma_1,\Gamma_2]=\Gamma_1\bullet\Gamma_2 - (-1)^{|\Gamma_1||\Gamma_2|}\Gamma_2\bullet\Gamma_1.
\]
Using this notation, the differential on the graph complex $GC_{2n}$ is defined as
\begin{equation}
\tilde{\delta}\Gamma = \Big[\intint, \Gamma \Big] =  \intint  \bullet \Gamma - (-1)^{|\Gamma |}  \Gamma  \bullet  \intint.
\label{delta_tilde}
\end{equation}

Note that the differential $\tilde{\delta}$ does neither split nor connect two connected components, the graph complex can be written as completed symmetric product of connected graphs
\[
GC_{2n} = S^+\left(GC_{2n,conn}\right).
\]

The cohomological degree of a connected component corresponds to 2n (\#vertices - 1) - (2n - 1) (\#edges).

Furthermore, we consider the operad $Graphs_{2n}$ as described in definition 3.6 of \cite{Willwacher2015}. Generators of this graded vector space are depicted by graphs with two kinds of vertices, external vertices which are numbered and depicted as empty (white) dots as well as internal vertices which are depicted as black dots and are indistinguishable and thus unnumbered. 

We consider only graphs without tadpoles, a loop with one vertex, at internal and external nodes as depicted in the following picture.
\[
\begin{tikzpicture}[scale=1,
helper/.style={outer sep=0,inner sep=0,minimum size=5,shape=coordinate},
default edge/.style={draw},
 every loop/.style={out=140, in=50, looseness=.8, distance=.8cm }]
 
\begin{scope}[shift={(-1,0)}]
\node (v0) at (0.600000023841858,0.799999952316284) [int] {};
\node (v2) at (0.600000023841858,0.399999976158142) [helper] {};
\node (v3) at (0.400000005960464,0.399999976158142) [helper] {};
\node (v4) at (0.800000011920929,0.399999976158142) [helper] {};

\draw[default edge] (v0)--(v3);
\draw[default edge] (v0)--(v2);
\draw[default edge] (v0)--(v4);
\draw (v0) to [out=70,in=110,loop] ();
\end{scope}

\begin{scope}[shift={(1,0)}]

\node (v0) at (0.600000023841858,0.799999952316284) [ext] {};
\node (v2) at (0.600000023841858,0.399999976158142) [helper] {};
\node (v3) at (0.400000005960464,0.399999976158142) [helper] {};
\node (v4) at (0.800000011920929,0.399999976158142) [helper] {};

\draw[default edge] (v0)--(v3);
\draw[default edge] (v0)--(v2);
\draw[default edge] (v0)--(v4);
\draw (v0) to [out=70,in=110,loop] ();
\end{scope}
\end{tikzpicture}
\]

Furthermore, we consider only graphs with all internal vertices at least trivalent and with no connected components consisting entirely of internal vertices. Edges have an odd degree of $1-2n$, i.e. one needs to choose an ordering of the edges. 

An operation in arity N has N numbered external vertices. The following picture shows connected graphs with 3 and 2 external vertices respectively.
\[
\begin{tikzpicture}[]
\begin{scope}[shift={(-1,0)}]
\draw (30:0.5) node [ext] (v1) {};
\draw (150:0.5) node [ext] (v2) {};
\draw (-90:0.5) node [ext] (v3) {};
\draw (0:0) node [int] (v0) {};
\draw  (v0) edge (v1);
\draw  (v0) edge (v2);
\draw  (v0) edge (v3);
\draw  (v1) edge (v2);
\end{scope}
\begin{scope}[shift={(1,0)}]
\node[ext](v) at(0,0) {};
\node[ext](w) at(.5,0) {};
\draw (v) edge [out=45, in=135] (w);
\end{scope}
\end{tikzpicture}
\]

Let $\Gamma$ be a graph in $Graphs_{2n}$. We introduce again the following notation. 
\begin{equation}
\begin{tikzpicture}[baseline=-0.65ex]
\node[draw, dashed, circle] (G) at (0,0) {$\Gamma$};
\node[](v4) at(0,-1) {};
\draw (G) edge [] (v4);
\end{tikzpicture}
\label{add_edge_2}
\end{equation}
This time we sum over all internal and external vertices of $\Gamma$ and connect for each term the incident edge to the corresponding vertex. 

The partial operadic composition $\Gamma_1 \circ_i\Gamma_2$ corresponds to insert $\Gamma_2$ at the external vertex $i$ of $\Gamma_1$ and sum over all possible ways to reconnect the incident edges of vertex $i$ of $\Gamma_1$ to one of the vertices (internal or external) of $\Gamma_2$. The following picture shows an example of a partial operadic composition.

\[
\begin{tikzpicture}[baseline=-0.5ex]
\draw (30:0.5) node [ext] (v1) {};
\draw (150:0.5) node [ext] (v2) {};
\draw (-90:0.5) node [ext] (v3) {1};
\draw (0:0) node [int] (v0) {};
\draw  (v0) edge (v1);
\draw  (v0) edge (v2);
\draw  (v0) edge (v3);
\draw  (v2) edge (v3);
\end{tikzpicture}
\circ_1
\sedge
=
\begin{tikzpicture}[baseline=-0.5ex]
\draw (30:0.5) node [ext] (v1) {};
\draw (150:0.5) node [ext] (v2) {};
\node[draw, dashed, circle, minimum size=0.8cm] (v3) at (-90:0.7) {};
\draw (0:0) node [int] (v0) {};
\draw  (v0) edge (v1);
\draw  (v0) edge (v2);
\draw  (v0) edge (v3);
\draw  (v2) edge (v3);
\begin{scope}[shift={(-0.25,-0.75)}]
\node[ext](v) at(0,0) {};
\node[ext](w) at(.5,0) {};
\draw (v) edge [out=45, in=135] (w);
\end{scope}
\end{tikzpicture}
\]

Let $\Gamma_1$ be an operation in arity $N_1$ and $\Gamma_2$ be an operation in arity $N_2$ then the Lie bracket on $Graphs_{2n}$ induced by the operadic composition is defined as
\[
[\Gamma_1,\Gamma_2]=\sum_{i \in N_1}\Gamma_1\circ_i\Gamma_2 - (-1)^{|\Gamma_1||\Gamma_2|}\sum_{i \in N_2}\Gamma_2\circ_i\Gamma_1.
\]
The differential on the graph complex $Graphs_{2n}$ is defined in the following way
\begin{equation}
\delta\Gamma = \Big[\intext,\Gamma\Big] -(-1)^{|\Gamma|}\frac{1}{2} \Gamma \bullet\intint = \intext\circ_1\Gamma + (-1)^{|\Gamma|}\sum_{i \in \text{int. vert. of }\Gamma}\Gamma\circ_i\intext-(-1)^{|\Gamma|}\frac{1}{2} \Gamma \bullet\intint.
\label{delta}
\end{equation}

Finally, we consider the operad $BVGraphs_{2n}$. It differs from the operad $Graphs_{2n}$ by allowing tadpoles at external vertices and is isomorphic as $\mathbb{S}$-module to 
\[
BVGraphs_{2n} \cong Graphs_{2n} \circ S(\Delta)/(\Delta^2),
\]
where the generator $\Delta$ in arity one has degree $1-2n$ and corresponds to a tadpole at an external vertex. The differential on $BVGraphs_{2n}$ is the same as the one on $Graphs_{2n}$. Since it doesn't alter tadpoles the isomorphism is one of dg $\mathbb{S}$-modules
\begin{equation}
(BVGraphs_{2n},\delta) \cong (Graphs_{2n},\delta) \circ S(\Delta)/(\Delta^2).
\label{delta_no_tadpoles}
\end{equation}
For further details to the considered graph complexes we refer the reader to \cite{Willwacher2015}.

Let us consider a graph $\Gamma \in GC_{2n}$ with one external vertex attached to it

\[
\begin{tikzpicture}[baseline=-0.65ex]
\node[draw, dashed, circle] (G) at (0,0) {$\Gamma$};
\node[ext](v4) at(0,-0.75) {};
\draw (G) edge [] (v4);
\end{tikzpicture}.
\]
The differential $\delta$ does not have any influence on the external vertex and its incident edge, i.e.
\begin{equation}
\delta
\begin{tikzpicture}[baseline=-0.5ex]
\node[draw, dashed, circle, minimum size=0.5cm] (G) at (0,0) {$\Gamma$};
\node[ext](v4) at(0,-0.75) {};
\draw (G) edge [] (v4);
\end{tikzpicture}
= 
\begin{tikzpicture}[baseline=-0.5ex]
\node[draw, dashed, circle, minimum size=0.5cm] (G) at (0,0) {$\tilde{\delta}\Gamma$};
\node[ext](v4) at(0,-0.75) {};
\draw (G) edge [] (v4);
\end{tikzpicture}
\label{diff_not_affect_hair}
\end{equation}
where $\tilde{\delta}$ refers to the differential on $GC_{2n}$ as defined in equation \ref{delta_tilde}.

\section{Homotopy derivations of the $BV_{2n}$ operads}

\subsection{Homotopy derivations as deformation complex}

We identify the complex of homotopy derivations of the $BV_{2n}$ operad with the deformation complex
\begin{equation}
\Der(BV_{2n,\infty}) := \prod_{N\geq 1} \Hom_{S_N}\left(BV_{2n}^{\text{!`}}(N),  BV_{2n,\infty}(N)\right) [1] \cong \Def(BV_{2n,\infty}\stackrel{\id}{\to}  BV_{2n,\infty})[1]
\label{der_def}
\end{equation}
where $BV_{2n,\infty} = \Omega(BV_{2n}^{\text{!`}})$ denotes the quasi-free resolution of the $ BV_{2n}$ operad obtained by the cobar construction of the Koszul dual cooperad $BV_{2n}^{\text{!`}}$ as introduced by Gálvez-Carrillo, Tonks and Vallette in \cite{CarrilloTonksVallette12} and described in chapter 7.8.7 of \cite{LodayVallette2012}.
Since the projection $p: \Omega(BV_{2n}^{\text{!`}}) \twoheadrightarrow BV_{2n}$ is a quasi-isomorphism, the deformation complex is quasi-isomorphic to 
\[
\Def( BV_{2n,\infty}\stackrel{p}{\to} BV_{2n})[1].
\]
There is an injective map of operads $i: BV_{2n} \hookrightarrow BVGraphs_{2n}$ which maps the generating operations to graphs in the following way
\begin{align}
\cdot \wedge \cdot &\mapsto
\begin{tikzpicture}[baseline=-0.65ex]
\node[ext] at(0,0) {};
\node[ext] at(.5,0) {};
\end{tikzpicture}
&
\co{\cdot}{\cdot} &\mapsto\sedge
&
\Delta &\mapsto \frac{1}{2}\stadpole.
\end{align}
As proven by Kontsevich \cite{Kontsevich1999} as well as Lambrechts and Voli\'c \cite{LambrechtsVolic2014} the restriction of this map to $e_{2n} \hookrightarrow Graphs_{2n}$ is a quasi-isomorphism, which is also stated in proposition 3.9 in \cite{Willwacher2015}. From this we deduce that the Cohomology of the $BVGraphs_{2n}$ operad corresponds to the $BV_{2n}$ operad due to equation \ref{delta_no_tadpoles} and the operadic Künneth formula
\[
H^{\bullet}(BVGraphs_{2n}) \cong H^{\bullet}(Graphs_{2n}) \circ S(\Delta)/(\Delta^2) \cong e_{2n} \circ S(\Delta)/(\Delta^2) \cong BV_{2n}.
\]
Hence, the inclusion $BV_{2n} \hookrightarrow BVGraphs_{2n}$ is a quasi-isomorphism. Therefore, the considered deformation complex is quasi-isomorphic to 
\begin{equation}
\Def( BV_{2n,\infty}\stackrel{\alpha}{\to} BVGraphs_{2n})[1],
\label{def}
\end{equation}
where $\alpha = i \circ p: \Omega(BV_{2n}^{\text{!`}})\rightarrow BVGraphs_{2n}$.
Following the notation of section \ref{conv_lie_alg} this deformation complex can be written as
\[
\prod_{N\geq 1} \Hom_{S_N}\left(BV_{2n}^{\text{!`}}(N),  BVGraphs_{2n}(N)\right) [1] \cong \prod_{N\geq 1}BV_{2n}^{\text{!}}(N)\hat{\otimes}_{S_N} BVGraphs_{2n}(N)
\]
where $BV_{2n}^{\text{!}}$ denotes the Koszul dual operad of the $BV_{2n}$ operad as described in section \ref{bv_koszul}. 
Finally, the considered deformation complex \ref{der_def} is quasi-isomorphic to 
\begin{equation}
Def \coloneqq\prod_{N\geq 1}e_{2n}\{-2n\}(N)\hat{\otimes}_{S_N} BVGraphs_{2n}(N)\left[[u]\right].
\label{def_conv}
\end{equation}

Elements of this deformation complex can be visualised as graphs in the following way. We interpret homogeneous elements of the deformation complex as a direct sum of graphs with two kinds of edges and additionally with a power series in $u$. External vertices are arranged on a horizontal dotted line. The $BVGraphs$ part including internal vertices is depicted above the line using solid lines for edges. Furthermore, the map of operads  $e_{2n}\{-2n\} \hookrightarrow Graphs_{2n}$ which maps the generating operations as follows
\begin{align}
\cdot \wedge \cdot &\mapsto
\begin{tikzpicture}[baseline=-0.5ex]
\node[ext] at(0,0) {};
\node[ext] at(.5,0) {};
\end{tikzpicture}
&
\co{\cdot}{\cdot} &\mapsto\dedge
\end{align}
allows to depict the $e_{2n}$ part also as part of the graph using dashed edges connecting the external nodes below the horizontal line. Note that for example a nested double Lie $[\cdot,[\cdot,\cdot]]$ bracket in the $e_{2n}$ part will be mapped to
\[
\begin{tikzpicture}[baseline=-0.65ex]
\draw[dotted] (-1.5,0) -- (1.5,0);
\node[ext](v0) at(-1,0) {};
\node[ext](v1) at(0,0) {};
\node[ext](v2) at(1,0) {};
\draw (v0) edge [out=-45, in=-135, dashed]  (v1);
\draw (v1) edge [out=-45, in=-135, dashed]  (v2);
\end{tikzpicture}
\oplus
\begin{tikzpicture}[baseline=-0.65ex]
\draw[dotted] (-1.5,0) -- (1.5,0);
\node[ext](v0) at(-1,0) {};
\node[ext](v1) at(0,0) {};
\node[ext](v2) at(1,0) {};
\draw (v1) edge [out=-45, in=-135, dashed]  (v2);
\draw (v0) edge [out=-45, in=-135, dashed]  (v2);
\end{tikzpicture}.
\]

The following picture shows an example of an element of the deformation complex.
\[
(2u^3+ u^2)\begin{tikzpicture}[baseline=-0.65ex]
\node[int](i1) at(0,1.5) {};
\node[int](i2) at(-0.5,1) {};
\node[int](i3) at(0.5,1) {};
\node[int](i4) at(0,0.5) {};
\draw (i1) edge []  (i2);
\draw (i1) edge []  (i3);
\draw (i1) edge []  (i4);
\draw (i2) edge []  (i3);
\draw (i2) edge []  (i4);
\draw (i3) edge []  (i4);
\draw[dotted] (-3,0) -- (3,0);
\node[ext](v0) at(-2.5,0) {};
\node[ext](v1) at(-1.5,0) {};
\node[ext](v2) at(-0.5,0) {};
\node[ext](v3) at(0.5,0) {};
\node[ext](v4) at(1.5,0) {};
\node[ext](v5) at(2.5,0) {};
\draw (i1) edge [out=180, in=90]  (v0);
\draw (i2) edge []  (v0);
\draw (i2) edge []  (v1);
\draw (i4) edge []  (v3);
\draw (v4) to [out=120, in=60, loop]  (v4);
\draw (i1) edge [out=0, in=90]  (v5);
\draw (v0) edge [out=-45, in=-135, dashed]  (v1);
\draw (v1) edge [out=-45, in=-135, dashed]  (v2);
\draw (v3) edge [out=-45, in=-135, dashed]  (v4);
\end{tikzpicture}
\oplus
(2u^3+ u^2)
\begin{tikzpicture}[baseline=-0.65ex]
\node[int](i1) at(0,1.5) {};
\node[int](i2) at(-0.5,1) {};
\node[int](i3) at(0.5,1) {};
\node[int](i4) at(0,0.5) {};
\draw (i1) edge []  (i2);
\draw (i1) edge []  (i3);
\draw (i1) edge []  (i4);
\draw (i2) edge []  (i3);
\draw (i2) edge []  (i4);
\draw (i3) edge []  (i4);
\draw[dotted] (-3,0) -- (3,0);
\node[ext](v0) at(-2.5,0) {};
\node[ext](v1) at(-1.5,0) {};
\node[ext](v2) at(-0.5,0) {};
\node[ext](v3) at(0.5,0) {};
\node[ext](v4) at(1.5,0) {};
\node[ext](v5) at(2.5,0) {};
\draw (i1) edge [out=180, in=90]  (v0);
\draw (i2) edge []  (v0);
\draw (i2) edge []  (v1);
\draw (i4) edge []  (v3);
\draw (v4) to [out=120, in=60, loop]  (v4);
\draw (i1) edge [out=0, in=90]  (v5);
\draw (v0) edge [out=-45, in=-135, dashed]  (v2);
\draw (v1) edge [out=-45, in=-135, dashed]  (v2);
\draw (v3) edge [out=-45, in=-135, dashed]  (v4);
\end{tikzpicture}
\]

We will now express the differential on the considered deformation complex \ref{def} in the form of the convolution dg Lie algebra \ref{def_conv} using the Lie bracket defined in equation \ref{lie_b_conv}. The differential consists of different parts. The first part of the differential originates from the Koszul dual $BV_{2n}^{!}$ of the $BV_{2n}$ operad as described in equation \ref{d_bv}. Let $\Gamma$ denote an element of the deformation complex \ref{def} in the form of the convolution dg Lie algebra \ref{def_conv}. The the differential $d_{BV}$ can be written as
\[
d_{BV} \Gamma = u\frac{1}{2}\Big[\dtadpole, \Gamma \Big].
\]
In the same notation the contributions to the differential due to the twisting by the Maurer-Cartan element $\alpha$ in the deformation complex \ref{def} read as
\begin{align*}
d_{\wedge}\Gamma &=\Big[\dedge,\Gamma\Big]\\
d_{\co{}{}}\Gamma &=\Big[\sedge,\Gamma\Big]\\
d_{\Delta}\Gamma &=u\frac{1}{2}\Big[\stadpole,\Gamma\Big].
\end{align*}
Finally, there is the contribution form the differential $\delta$ inherent to the operad $BVGraphs_{2n}$ as defined in equation \ref{delta}. Summarising, the total differential on the considered deformation complex \ref{def} in the form of the convolution dg Lie algebra $Def$ as defined in \ref{def_conv} is given by
\begin{equation}
d = \delta + 
\Big[\dedge,\cdot \Big]
+
\Big[\sedge,\cdot\Big]
+ u\frac{1}{2}\left(
\Big[\stadpole,\cdot\Big]
+
\Big[\dtadpole, \cdot \Big]
\right).
\label{total_differential}
\end{equation}
Hence, the homotopy derivations $\Der(BV_{2n,\infty})$ of the $BV_{2n}$ operad are quasi-isomorphic to the convolution dg Lie algebra $(Def,d)$ in the form of equation \ref{def_conv} and with differential \ref{total_differential}.

\subsection{A map of coalgebras}
We can express the considered convolution dg Lie algebra $Def$ as completed symmetric product of its connected components
\begin{equation}
Def = S^{+}\left(\left(e_{2n}\{-2n\}\hat{\otimes}_{S} BVGraphs_{2n}\right)_{conn}\right)\left[[u]\right],
\label{def_s+}
\end{equation}
where connectedness refers to graphs connected via solid or dashed edges. Similarly the graph complex $GC_{2n}^{\geq2}$ splits into the completed symmetric product of its connected components
\[
GC_{2n}^{\geq2} = S^{+}\left(GC_{2n,conn}^{\geq2} \right).
\]

In the following we define a map $F$ of the cocommutative coalgebras
\begin{equation}
S^{+}\left(GC_{2n,conn}^{\geq2} \oplus \sedge\right)\left[[u]\right]
\xrightarrow[F]{}
S^{+}\left(\left(e_{2n}\{-2n\}\hat{\otimes}_{S} BVGraphs_{2n}\right)_{conn}\right)\left[[u]\right].
\label{map_f}
\end{equation}
Here, the coalgebra structure corresponds to the tensor coalgebra equipped with the deconcatenation coproduct defined in equation \ref{deconcatenation}. For both coalgebras the space of cogenerators is composed of the respective connected graphs and will be denoted by
\begin{equation}
\begin{aligned}
U&:=\left(GC_{2n,conn}^{\geq2} \oplus \sedge\right)\\
V&:=\left(e_{2n}\{-2n\}\hat{\otimes}_{S} BVGraphs_{2n}\right)_{conn}.
\end{aligned}
\label{cogenerators}
\end{equation}

The map $F$ of cocommutative coalgebras is defined via its projections on the cogenerators
\[
F_n: S^n(U)\rightarrow V.
\]
Let $\Gamma \in GC_{2n,conn}^{\geq2}$, the map $F_1: U \rightarrow V, \Gamma \mapsto \hat{\Gamma}$ corresponds to "adding a hair". In this context a hair consists of one external vertex connected to the graph by a solid edge . The image of the map $F_1$ equals the sum of graphs with one hair obtained by adding a hair to $\Gamma$ in all possible ways, i.e. at every vertex. Using the notation of equation \ref{add_edge} we can summarise the map $F_1$ as
\begin{equation}
\begin{aligned}
\Gamma &\mapsto \hat{\Gamma}=
\begin{tikzpicture}[baseline=-0.65ex]
\node[draw, dashed, circle] (G) at (0,0) {$\Gamma$};
\node[ext](v4) at(0,-0.75) {};
\draw (G) edge [] (v4);
\end{tikzpicture}\\
\sedge &\mapsto \sedge.
\end{aligned}
\label{f1}
\end{equation}

Let $\Gamma^1, \Gamma^2 \in V$ be connected graphs with one and two external vertices respectively. Furthermore let $\Gamma_1, \Gamma_2 \in V$ be two connected graphs. We consider the composition 
\[
\Gamma^1\circ_1\Gamma_1
\]
which is defined by inserting $\Gamma_1$ into the external vertex of $\Gamma^1$ and sum over all possible ways to connect the incident edges of $\Gamma^1$ to the vertices of $\Gamma_1$. Similarly the composition 
\[
\Gamma^2\circ(\Gamma_1,\Gamma_2) 
\]
is defined by inserting $\Gamma_1$ into the first and $\Gamma_2$ into the second vertex of $\Gamma^2$ and sum over all possible ways to connect the incident edges of $\Gamma^2$ to vertices of $\Gamma_1$ and $\Gamma_2$ respectively.

We will use the following notation. For connected graphs $\Gamma_i \in U$ we abbreviate
\[
\Gamma_1\dots\Gamma_n := \frac{1}{n!}\sum_{\sigma \in S_n}\Gamma_{\sigma(1)}\otimes\dots\otimes\Gamma_{\sigma(n)} \in S^n(U).
\]
Furthermore, let $\Gamma_i \in V$ for $i\neq 0$ be connected graphs with one external vertex and let $\Gamma_0 \in S^n(V)$ for some $n\geq1$, then we denote a symmetrised stack of compositions by
\[
\Gamma_1(\dots(\Gamma_n(\Gamma_0)))
:= \sum_{\sigma \in S_n}\Gamma_{\sigma(1)}\circ_1(\Gamma_{\sigma(2)}\circ_1(\dots\circ_1(\Gamma_{\sigma(n)}\circ_1\Gamma_0))).
\]

Since
\[
\sedge \, \sedge \, \Gamma_1\dots\Gamma_{n-2} =0
\]
vanishes due to the odd symmetry of interchanging the first two graphs we only need to consider at most one graph $\sedge$.

Using the introduced notation as well as the notation for connecting an edge to a graph as stated in equation \ref{add_edge_2} the projection $F_n$ is defined as
\begin{align*}
F_n(\Gamma_1\dots\Gamma_n)& :=u^{n-1}\hat{\Gamma}_1(\dots(\hat{\Gamma}_n(\extnode)))= u^{n-1}
\begin{tikzpicture}[baseline=8ex]
\node[ext] (v1) at (0,0) {};
\node[draw, circle] (v2) at (0,0.5) {};
\draw (v1) edge (v2);
\node[draw, dashed, circle, minimum size=0.9cm] (v3) at (0,0.3) {};
\node[draw, circle] (v4) at (0,1.25) {};
\draw (v3) edge (v4);
\node[draw, dashed, circle, minimum size=1.7cm] (v5) at (0,0.65) {};
\node[draw, circle] (v6) at (0,2) {};
\draw (v5) edge (v6);
\node[draw, dashed, circle, minimum size=2.5cm] (v7) at (0,1) {};
\node[draw, circle, dotted] (v8) at (0,2.75) {};
\draw (v7) edge[dotted] (v8);
\end{tikzpicture}\\
F_n(\sedge \Gamma_1\dots\Gamma_{n-1})& := u^{n-1} \left(\hat{\Gamma}_1(\dots(\hat{\Gamma}_{n-1}(\sedge)))+u\hat{\Gamma}_1(\dots(\hat{\Gamma}_{n-1}(\stadpole)))\right)\\
&=u^{n-1}
\begin{tikzpicture}[baseline=8ex]
\node[ext](v) at(-0.25,0.3) {};
\node[ext](w) at(.25,0.3) {};
\draw (v) edge [out=45, in=135] (w);
\node[draw, dashed, circle, minimum size=0.9cm] (v3) at (0,0.3) {};
\node[draw, circle] (v4) at (0,1.25) {};
\draw (v3) edge (v4);
\node[draw, dashed, circle, minimum size=1.7cm] (v5) at (0,0.65) {};
\node[draw, circle] (v6) at (0,2) {};
\draw (v5) edge (v6);
\node[draw, dashed, circle, minimum size=2.5cm] (v7) at (0,1) {};
\node[draw, circle, dotted] (v8) at (0,2.75) {};
\draw (v7) edge[dotted] (v8);
\end{tikzpicture}
+u^n
\begin{tikzpicture}[baseline=8ex]
\node[ext](v) at(0,0.2) {};
\draw (v) to [out=120, in=60, loop] (v);
\node[draw, dashed, circle, minimum size=0.9cm] (v3) at (0,0.3) {};
\node[draw, circle] (v4) at (0,1.25) {};
\draw (v3) edge (v4);
\node[draw, dashed, circle, minimum size=1.7cm] (v5) at (0,0.65) {};
\node[draw, circle] (v6) at (0,2) {};
\draw (v5) edge (v6);
\node[draw, dashed, circle, minimum size=2.5cm] (v7) at (0,1) {};
\node[draw, circle, dotted] (v8) at (0,2.75) {};
\draw (v7) edge[dotted] (v8);
\end{tikzpicture}
\end{align*}
where $\hat{\Gamma}$ denotes the image of $\Gamma$ under the map $F_1$. Note that this notation includes an implicit summation over all possible ways to add a hair to a vertex of $\Gamma$ as defined in equation \ref{f1} for the map $F_1$.

In order to prove the following lemma we will need some properties of the proposed map $F_n$. 
First notice that 
\[
\begin{tikzpicture}[baseline=8ex]
\node[] (v2) at (0,0.3) {$\Gamma_0$};
\node[draw, dashed, circle, minimum size=0.9cm] (v3) at (0,0.3) {};
\node[draw, circle] (v4) at (0,1.25) {};
\draw (v3) edge (v4);
\node[draw, dashed, circle, minimum size=1.7cm] (v5) at (0,0.65) {};
\node[draw, circle] (v6) at (0,2) {};
\node[] at (-0.2,1.7) {1};
\node[] at (0.5,1.7) {2};
\draw (v5) edge (v6);
\draw (v6) edge[out=-30, in=70] (v5);
\node[draw, dashed, circle, minimum size=2.5cm] (v7) at (0,1) {};
\node[draw, circle, dotted] (v8) at (0,2.75) {};
\draw (v7) edge[dotted] (v8);
\end{tikzpicture}
=0.
\]
This follows from the fact that we have to sum over alle possible ways to connect edges $1$ and $2$ to a vertex to both the graph above, depicted as solid circle, as well as the graph below, depicted as dashed circle. Therefore, there is an odd symmetry of interchanging the two edges $1$ and $2$. Hence, the graph vanishes.

From this follows 
\begin{equation}
\Big[\stadpole,\hat{\Gamma}_1(\dots(\hat{\Gamma}_n(\Gamma_0)))\Big]=
\sum_{j=1}^n\hat{\Gamma}_1(\dots(\Big[\stadpole,\hat{\Gamma}_j\Big](\dots(\hat{\Gamma}_n(\Gamma_0)))))+\hat{\Gamma}_1(\dots(\hat{\Gamma}_n(\Big[\stadpole,\hat{\Gamma}_0\Big])))
\label{loop_dist}
\end{equation}
which is equivalent to
\begin{equation*}
\begin{tikzpicture}[baseline=8ex]
\node[] (v2) at (0,0.3) {$\Gamma_0$};
\node[draw, dashed, circle, minimum size=0.9cm] (v3) at (0,0.3) {};
\node[draw, circle] (v4) at (0,1.25) {};
\draw (v3) edge (v4);
\node[draw, dashed, circle, minimum size=1.7cm] (v5) at (0,0.65) {};
\node[draw, circle] (v6) at (0,2) {};
\draw (v5) edge (v6);
\node[draw, dashed, circle, minimum size=2.5cm] (v7) at (0,1) {};
\node[ minimum size=0.5cm] (v8) at (0,2.0) {};
\draw (v8) to [out=120, in=60, loop] (v8);
\end{tikzpicture}
=\sum_j
\begin{tikzpicture}[baseline=8ex]
\node[] (v2) at (0,0.3) {$\Gamma_0$};
\node[draw, dashed, circle, minimum size=0.9cm] (v3) at (0,0.3) {};
\node[draw, circle] (v4) at (0,1.45) {j};
\draw (v4) to [out=30, in=-30, loop] (v4);
\draw (v3) edge (v4);
\node[draw, dashed, circle, minimum size=2.4cm] (v5) at (0,1) {};
\node[draw, circle] (v6) at (0,2.75) {};
\draw (v5) edge (v6);
\node[ minimum size=0.5cm] (v8) at (0,2.0) {};
\end{tikzpicture}
+
\begin{tikzpicture}[baseline=8ex]
\node[] (v2) at (0,0.3) {$\Gamma_0$};
\draw (v2) to [out=120, in=60, loop] (v2);
\node[draw, dashed, circle, minimum size=1.7cm] (v5) at (0,0.65) {};
\node[draw, circle] (v6) at (0,2) {};
\draw (v5) edge (v6);
\node[draw, dashed, circle, minimum size=2.5cm] (v7) at (0,1) {};
\node[draw, circle] (v8) at (0,2.75) {};
\draw (v7) edge[] (v8);
\end{tikzpicture}.
\end{equation*}
Furthermore, we have
\[
\sum_{p,q;p+q=n}\sum_{\tau \in sh(p,q)}\hat{\Gamma}_{\tau(1)}(\dots(\hat{\Gamma}_{\tau(p)}(\Gamma_0')))\otimes\hat{\Gamma}_{\tau(p+1)}(\dots(\hat{\Gamma}_{\tau(n)}(\Gamma_0'')))
=\hat{\Gamma}_1(\dots(\hat{\Gamma}_n(\Gamma_0'\otimes\Gamma_0''))).
\]
From the last two properties we deduce that
\begin{equation}
\begin{aligned}
&\sum_{p,q;p+q=n}\sum_{\tau \in sh(p,q)}
\sedge\circ\Big(\hat{\Gamma}_{\tau(1)}(\dots(\hat{\Gamma}_{\tau(p)}(\Gamma_0'))),\hat{\Gamma}_{\tau(p+1)}(\dots(\hat{\Gamma}_{\tau(n)}(\Gamma_0'')))\Big)\\
&=\hat{\Gamma}_1(\dots(\hat{\Gamma}_n(
\begin{tikzpicture}[baseline=-0.5ex]
\node[](v) at(0,0) {$\Gamma_0'$};
\node[](w) at(1,0) {$\Gamma_0''$};
\draw (v) edge [out=60, in=120] (w);
\end{tikzpicture}
)))
\end{aligned}
\label{pull_edge_inside}
\end{equation}
which corresponds to
\begin{equation*}
\sum_{p,q;p+q=n}\sum_{\tau \in sh(p,q)}
\begin{tikzpicture}[baseline=8ex]
\node[] (v2) at (0,0.3) {$\Gamma_0'$};
\node[draw, dashed, circle, minimum size=0.9cm] (v3) at (0,0.3) {};
\node[draw, circle] (v4) at (0,1.25) {};
\draw (v3) edge (v4);
\node[draw, dashed, circle, minimum size=1.7cm] (v5) at (0,0.65) {};
\node[draw, circle] (v6) at (0,2) {};
\draw (v5) edge (v6);
\node[draw, dashed, circle, minimum size=2.5cm] (v7) at (0,1) {};
\node[] (v2) at (3,0.3) {$\Gamma_0''$};
\node[draw, dashed, circle, minimum size=0.9cm] (v3) at (3,0.3) {};
\node[draw, circle] (v4) at (3,1.25) {};
\draw (v3) edge (v4);
\node[draw, dashed, circle, minimum size=1.7cm] (v5) at (3,0.65) {};
\node[draw, circle] (v6) at (3,2) {};
\draw (v5) edge (v6);
\node[draw, dashed, circle, minimum size=2.5cm] (v8) at (3,1) {};
\draw (v7) edge[out=90, in=90] (v8);
\end{tikzpicture}
=
\begin{tikzpicture}[baseline=8ex]
\node[] (v1) at (-0.5,0.65) {$\Gamma_0'$};
\node[] (v2) at (0.5,0.65)  {$\Gamma_0''$};
\draw (v1) edge[out=60, in=120] (v2);
\node[draw, dashed, circle, minimum size=1.7cm] (v5) at (0,0.65) {};
\node[draw, circle] (v6) at (0,2) {};
\draw (v5) edge (v6);
\node[draw, dashed, circle, minimum size=2.5cm] (v7) at (0,1) {};
\node[draw, circle] (v8) at (0,2.75) {};
\draw (v7) edge[] (v8);
\end{tikzpicture}.
\end{equation*}
Finally we verify that 
\begin{equation}
\delta(\hat{\Gamma}_1(\dots(\hat{\Gamma}_n(\Gamma_0))))=\sum_{j=1}^n\hat{\Gamma}_1(\dots(\widehat{\tilde{\delta}\Gamma_j}(\dots(\hat{\Gamma}_n(\Gamma_0)))))
+ \hat{\Gamma}_1(\dots(\hat{\Gamma}_n(\delta\Gamma_0)))
\label{diff_dist}
\end{equation}
where the differential $\tilde\delta$ on the graph complex $GC_{2n}$ is defined in equation \ref{delta_tilde} and the differential $\delta$ on the graph complex $Graphs_{2n}$ is described in equation \ref{delta}.
Indeed we have 
\[
\Big[\intext,\hat{\Gamma}_1(\dots(\hat{\Gamma}_n(\Gamma_0)))\Big]=\sum_{j=1}^n\hat{\Gamma}_1(\dots(\Big[\intext,\hat{\Gamma}_j\Big](\dots(\hat{\Gamma}_n(\Gamma_0)))))+\hat{\Gamma}_1(\dots(\hat{\Gamma}_n(\Big[\intext,\Gamma_0\Big])))
\]
as well as
\[
(\hat{\Gamma}_1(\dots(\hat{\Gamma}_n(\Gamma_0))))\bullet\intint = \sum_{j=1}^n\hat{\Gamma}_1(\dots(\hat{\Gamma}_j\bullet\intint(\dots(\hat{\Gamma}_n(\Gamma_0)))))+\hat{\Gamma}_1(\dots(\hat{\Gamma}_n(\Gamma_0\bullet\intint)))
\]
and  
\[
\delta\hat{\Gamma}=\widehat{\tilde{\delta}\Gamma}.
\]
The last identity follows from equation \ref{diff_not_affect_hair}.

\subsection{A quasi-isomporhism}

Let us consider the graph complex
\[
\left(S^{+}\left(GC_{2n,conn}^{\geq2} \oplus \sedge\right)\left[[u]\right],\tilde{d}\right)
\]
where the differential $\tilde{d}$ is given by
\begin{equation}
\tilde{d}\Gamma:=\tilde{\delta}+u\frac{1}{2}\Big[\stadpole,\Gamma\Big]
\label{d_tilde}
\end{equation}
and $\tilde{\delta}$ refers to the differential on $GC_{2n}$ as defined in equation \ref{delta_tilde}. The differential $\tilde{d}$ acts as a coderivation on the deconcatenation copruduct.

Let $U$ and $V$ denote the cogenerators of the respective coalgebras as defined in equation \ref{cogenerators}. We show the following lemma:
\begin{lemma}
The map $F$ of coalgebras
\[
\left(S^{+}(U)\left[[u]\right],\tilde{d}\right)\xrightarrow[F]{}\left(S^{+}(V)\left[[u]\right],d\right)
\]
is a map of complexes, i.e. $F\circ\tilde{d}=d\circ F$. 
\label{map_complexes}
\end{lemma}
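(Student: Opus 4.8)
The plan is to reduce the identity $F\circ\tilde d=d\circ F$ to a statement on cogenerators and then to verify that statement by the (finite but bookkeeping-heavy) computation that the distributivity identities \eqref{diff_not_affect_hair}, \eqref{loop_dist}, \eqref{pull_edge_inside} and \eqref{diff_dist} were set up for.

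\textbf{Reduction to cogenerators.} Since $F$ is a morphism of coalgebras and $\tilde d,d$ are coderivations for the deconcatenation coproduct $\Delta$, both $F\tilde d$ and $dF$ are $(F,F)$-coderivations, i.e.\ satisfy $\Delta\circ G=(G\otimes F+F\otimes G)\circ\Delta$ for $G\in\{F\tilde d,dF\}$; this follows at once from $\Delta F=(F\otimes F)\Delta$ together with $\Delta\tilde d=(\tilde d\otimes\id+\id\otimes\tilde d)\Delta$ and $\Delta d=(d\otimes\id+\id\otimes d)\Delta$. As $S^{+}(V)[[u]]$ is the cofree conilpotent cocommutative coalgebra over $\mathbb{R}[[u]]$ on $V[[u]]$, an $(F,F)$-coderivation into it is uniquely determined by its corestriction $\pi_V\circ(\,\cdot\,)$ along the projection $\pi_V\colon S^{+}(V)[[u]]\to V[[u]]$. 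Hence it suffices to prove $\pi_V F\tilde d=\pi_V dF$, and, since everything is defined on monomials of cogenerators, it is enough to check this on $\Gamma_1\cdots\Gamma_n$ and on $\sedge\,\Gamma_1\cdots\Gamma_{n-1}$ with $\Gamma_i\in GC^{\geq2}_{2n,conn}$.

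\textbf{Expanding the two sides.} On the source side $\tilde d$ is the coderivation with corestriction $\Gamma\mapsto\tilde\delta\Gamma+u\tfrac12[\stadpole,\Gamma]$; applied to a monomial it acts as a graded derivation, and on a $GC$-cogenerator $\tilde d$ reduces to $\tilde\delta$ (there is no external vertex for $\stadpole$ to act on), so
\[
\pi_V F\tilde d(\Gamma_1\cdots\Gamma_n)=\sum_j\pm\,F_n(\Gamma_1\cdots\tilde\delta\Gamma_j\cdots\Gamma_n)=u^{n-1}\sum_j\pm\,\hat\Gamma_1(\dots(\widehat{\tilde\delta\Gamma_j})(\dots(\hat\Gamma_n(\extnode)))),
\]
and similarly on $\sedge\,\Gamma_1\cdots\Gamma_{n-1}$, with the extra contribution coming from $\tilde d$ acting on the $\sedge$ entry. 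On the target side all five summands $\delta$, $[\dedge,\cdot]$, $[\sedge,\cdot]$, $[\stadpole,\cdot]$, $[\dtadpole,\cdot]$ of $d$ act on each connected component separately (none joins two components: the relevant partial compositions only attach an extra edge or vertex to a single component), so $d$ is likewise a derivation-type coderivation, and therefore $\pi_V dF$ of a monomial collapses onto $d$ applied to the single connected image $F_n$, namely $d\bigl(u^{n-1}\hat\Gamma_1(\dots(\hat\Gamma_n(\extnode)))\bigr)$, respectively $d\bigl(u^{n-1}(\hat\Gamma_1(\dots(\hat\Gamma_{n-1}(\sedge)))+u\,\hat\Gamma_1(\dots(\hat\Gamma_{n-1}(\stadpole))))\bigr)$. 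I then apply the five summands of $d$ separately. The $\delta$-summand is handled by \eqref{diff_dist}: it distributes over the tower of compositions, producing exactly $\sum_j\hat\Gamma_1(\dots(\widehat{\tilde\delta\Gamma_j})(\dots))$ plus a bottom term $\hat\Gamma_1(\dots(\hat\Gamma_n(\delta(\extnode))))$ which vanishes since $\delta(\extnode)=0$ (its only candidate term has a univalent internal vertex, excluded from $BVGraphs_{2n}$). Thus the $\delta$-part already matches the entire $\tilde\delta$-side, and it remains to show that the $[\dedge,\cdot]+[\sedge,\cdot]+u\tfrac12([\stadpole,\cdot]+[\dtadpole,\cdot])$-contribution vanishes on $\hat\Gamma_1(\dots(\hat\Gamma_n(\extnode)))$, and on the $\sedge$-monomial equals the remaining sum $\sum_j\pm F_n(\sedge\,\Gamma_1\cdots\tilde\delta\Gamma_j\cdots\Gamma_{n-1})$.

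\textbf{Cancellation of the twisting terms; the main obstacle.} For the tadpole contributions I would distribute $u\tfrac12[\stadpole,\cdot]$ and $u\tfrac12[\dtadpole,\cdot]$ down the tower by \eqref{loop_dist}: every term in which a tadpole end lands on an internal vertex is killed by the no-tadpole rule of $BVGraphs_{2n}$, every term creating a double edge is killed by the odd degree of the edges, the terms at intermediate levels cancel in the combination $[\stadpole,\cdot]+[\dtadpole,\cdot]$, and the bottom term $[\stadpole,\extnode]+[\dtadpole,\extnode]$ vanishes because the two ways of composing yield the same graph and cancel (here $|\extnode|=0$); for the $\sedge$-monomial this bottom contribution reorganizes, using the precise shape of $F_n(\sedge\,\Gamma_1\cdots\Gamma_{n-1})$, into $\hat\Gamma_1(\dots(\hat\Gamma_{n-1}(\,\cdot\,)))$ applied at the bottom to $\sedge+u\stadpole$, where the matching is a direct check. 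For the dashed-edge contributions of $[\dedge,\cdot]$ and of the corresponding half of $[\sedge,\cdot]$ I would use the vanishing of the symmetrised "double attachment to both a top and a bottom graph" established just before \eqref{loop_dist}, together with \eqref{pull_edge_inside}, which rewrites a symmetrised sum of external-vertex compositions with $\sedge$ as a single "pull the edge inside" operation; with these the surviving terms telescope. I expect the main difficulty to lie exactly here: carrying the Koszul signs through while proving that all the $e_{2n}$-part (dashed) contributions cancel, since they have no counterpart on the $\tilde d$-side. The cleanest way to organise this is to treat the twisting part $d-\delta=[\dedge,\cdot]+[\sedge,\cdot]+u\tfrac12([\stadpole,\cdot]+[\dtadpole,\cdot])$ as a whole (the bracket with the Maurer--Cartan element) and to show that it annihilates the image of $F$ on the pure-$GC$ monomials and reproduces precisely the $u$-shift appearing in the formula for $F_n(\sedge\,\Gamma_1\cdots\Gamma_{n-1})$ on the $\sedge$-monomials, in each case reducing via \eqref{loop_dist}, \eqref{pull_edge_inside} and \eqref{diff_dist} to the behaviour at the bottom of the tower ($\extnode$, resp.\ $\sedge$), where the claim is a direct verification.
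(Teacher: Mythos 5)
Your reduction to cogenerators is the same first step as the paper's, but the two structural claims on which your subsequent computation rests are both false, and the proof collapses at exactly those points. First, on the target side it is not true that every summand of $d$ acts on connected components separately: the tadpole summands $u\tfrac12\bigl[\stadpole,\cdot\bigr]$ and $u\tfrac12\bigl[\dtadpole,\cdot\bigr]$ insert the whole (possibly disconnected) graph into the single external vertex of the tadpole and reconnect the two ends of the loop to arbitrary vertices, so they produce terms in which a solid, respectively dashed, edge joins two different connected components, weighted by $u$. This is precisely the quadratic part $d^2\colon S^2(V)\to V$, $d^2(\Gamma_1\otimes\Gamma_2)=u\bigl(\sedge\circ(\Gamma_1,\Gamma_2)+\dedge\circ(\Gamma_1,\Gamma_2)\bigr)$ of the projected differential in the paper's proof (equation \eqref{d2}). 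Consequently your claim that it remains to show the twisting part annihilates $\hat\Gamma_1(\dots(\hat\Gamma_n(\extnode)))$ cannot be saved: the terms $\bigl[\sedge,\cdot\bigr]$ and $\bigl[\dedge,\cdot\bigr]$ applied to a single tower do not vanish (they attach a new external vertex to the top of the tower and a second external vertex at the bottom), and in the correct argument they are cancelled by $d^2$ applied to pairs $F_p\otimes F_q$, via \eqref{pull_edge_inside}; the matching of $u$-powers ($u\cdot u^{p-1}u^{q-1}=u^{n-1}$) is exactly why the higher components $F_n$ carry the weight $u^{n-1}$ at all. If $d$ really acted componentwise, the symmetric-algebra extension of $F_1$ would already be a chain map and the towers would be pointless.

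Second, on the source side it is not true that $\tilde d$ reduces to $\tilde\delta$ on a $GC$-cogenerator: $u\tfrac12\bigl[\stadpole,\Gamma\bigr]$ for $\Gamma\in GC^{\geq2}_{2n,conn}$ is the sum over all ways of adding an edge between two vertices of $\Gamma$, times $u$ (this is the Merkulov--Willwacher deformation of the $GC$ differential in \eqref{d_tilde}), and it is generically nonzero. In the paper's proof this part of $\tilde d$ produces the left-hand contributions that are matched on the target by distributing the tadpole bracket down the tower using \eqref{loop_dist} (the terms called $D$ and $DD$ there). With both omissions your two sides would differ by nonzero terms, so the plan as written does not prove the lemma; the missing idea is precisely the interplay between the component-joining part $d^2$ of the differential and the $u$-graded tower structure of $F$.
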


\begin{proof}
Let $\pi:S^+(V)\rightarrow V$ denote the projection onto the cogenerators. The projection of the differential $d$, defined in equation \ref{total_differential}, splits in two parts
\[
\pi\circ d = d^1 + d^2
\]
with 
\begin{align*}
d^1&: V\rightarrow V\\
d^2&:S^2(V)\rightarrow V.
\end{align*}
Thereby the part $d^2$ is given by
\begin{equation}
d^2(\Gamma_1\otimes\Gamma_2)=
u\left(\sedge\circ(\Gamma_1,\Gamma_2)+\dedge\circ(\Gamma_1,\Gamma_2)\right)=
u\left(
\begin{tikzpicture}[baseline=-0.5ex]
\node[] (g1) at (0,0) {$\Gamma_1$};
\node[] (g2) at (1,0) {$\Gamma_2$};
\draw (g1) edge [out=60, in=120] (g2);
\end{tikzpicture}
+
\begin{tikzpicture}[baseline=-0.5ex]
\node[] (g1) at (0,0) {$\Gamma_1$};
\node[] (g2) at (1,0) {$\Gamma_2$};
\draw (g1) edge [out=-60, in=-120, dashed] (g2);
\end{tikzpicture}
\right)
\label{d2}
\end{equation}

Projecting the relation 
\[
F\circ\tilde{d}=d\circ F
\]
to the cogenerators $V$ and restricting to $S^n(U)$ leads to the following two relations, which are to be shown:
\begin{equation}
\begin{aligned}
\sum_{j=1}^n F_n(\Gamma_1\dots \tilde{d}\Gamma_j\dots\Gamma_n)&=d^1F_n(\Gamma_1\dots\Gamma_n)\\
&+\sum_{p,q;p+q=n;p,q\neq0}\sum_{\tau \in sh(p,q)}d^2(F_p(\Gamma_{\tau(1)}\dots\Gamma_{\tau(p)})
\otimes F_q(\Gamma_{\tau(p+1)}\dots\Gamma_{\tau(n)}))
\label{eq_noexclass}
\end{aligned}
\end{equation}
\begin{equation}
\begin{aligned}
&\sum_{j=1}^{n-1} F_n(\sedge\Gamma_1\dots \tilde{d}\Gamma_j\dots\Gamma_{n-1})=d^1F_n(\sedge\Gamma_1\dots\Gamma_{n-1})\\
&+2\sum_{p,q;p+q=n-1;q\neq0}\sum_{\tau \in sh(p,q)}d^2(F_{p+1}(\sedge\Gamma_{\tau(1)}\dots\Gamma_{\tau(p)})\otimes F_q(\Gamma_{\tau(p+1)}\dots\Gamma_{\tau(n-1)}))
\label{eq_exclass}
\end{aligned}
\end{equation}

Let us first prove equation \ref{eq_noexclass}.
The term 
\[
d^1F_n(\Gamma_1\dots\Gamma_n) = A+B+C+D+E
\]
has the following contributions
\begin{align*}
A:=\delta F_n(\Gamma_1\dots\Gamma_n)&= \sum_{j=1}^n F_n(\Gamma_1\dots\tilde{\delta}\Gamma_j\dots\Gamma_n)\\
B:=u^{n-1}\Big[\sedge,\hat{\Gamma}_1(\dots(\hat{\Gamma}_n(\extnode)))\Big]
&=u^{n-1}\Big(2\sedge\circ_1(\hat{\Gamma}_1(\dots(\hat{\Gamma}_n(\extnode))))
-\hat{\Gamma}_1(\dots(\hat{\Gamma}_n(\sedge)))
\Big)\\
C:=u^{n-1}\Big[\dedge,\hat{\Gamma}_1(\dots(\hat{\Gamma}_n(\extnode)))\Big]&
=u^{n-1}\Big(2\dedge\circ_1(\hat{\Gamma}_1(\dots(\hat{\Gamma}_n(\extnode))))
-\hat{\Gamma}_1(\dots(\hat{\Gamma}_n(\dedge)))
\Big)\\
D:=u^{n}\Big[\stadpole,\hat{\Gamma}_1(\dots(\hat{\Gamma}_n(\extnode)))\Big]&=
u^{n}\sum_{j=1}^n\hat{\Gamma}_1(\dots(\Big[\stadpole,\hat{\Gamma}_j\Big](\dots(\hat{\Gamma}_n(\extnode)))))\\
E=u^{n}\Big[\dtadpole,\hat{\Gamma}_1(\dots(\hat{\Gamma}_n(\extnode)))\Big]&=0.
\end{align*}
In the calculation for A we used equation \ref{diff_dist}, whereas for D equation \ref{loop_dist} was applied.
The terms $A+D$ are equal to the left hand side of equation \ref{eq_noexclass}:
\[
\sum_{j=1}^n F_n(\Gamma_1\dots \tilde{d}\Gamma_j\dots\Gamma_n) = A + D.
\]

According to equation \ref{d2} the second term on the right hand side
\[
\sum_{p,q;p+q=n;p,q\neq0}\sum_{\tau \in sh(p,q)}d^2(F_p(\Gamma_{\tau(1)}\dots\Gamma_{\tau(p)})\otimes F_q(\Gamma_{\tau(p+1)}\dots\Gamma_{\tau(n)})) = F+G
\]
has the contributions
\begin{align*}
F:=u^{n-1}\sum_{p,q;p+q=n;p,q\neq0}\sum_{\tau \in sh(p,q)}
\sedge\Big(\hat{\Gamma}_{\tau(1)}(\dots(\hat{\Gamma}_{\tau(p)})),\hat{\Gamma}_{\tau(p+1)}(\dots(\hat{\Gamma}_{\tau(n)}))\Big)&= -B\\
G:=u^{n-1}\sum_{p,q;p+q=n;p,q\neq0}\sum_{\tau \in sh(p,q)}
\dedge\Big(\hat{\Gamma}_{\tau(1)}(\dots(\hat{\Gamma}_{\tau(p)})),\hat{\Gamma}_{\tau(p+1)}(\dots(\hat{\Gamma}_{\tau(n)}))\Big)&= -C
\end{align*}
which cancel the terms $B+C$ due to equation \ref{pull_edge_inside}.

Let us now verify equation \ref{eq_exclass}. The term 
\[
d^1F_n(\sedge\Gamma_1\dots\Gamma_{n-1}) = AA+BB+CC+DD+EE
\]
has the following contributions
\[
AA:=\delta F_n(\sedge\Gamma_1\dots\Gamma_{n-1})= \sum_{j=1}^{n-1} F_n(\sedge\Gamma_1\dots\tilde{\delta}\Gamma_j\dots\Gamma_{n-1})
\]
\[
\begin{aligned}
BB:=&u^{n-1}\Big[\sedge,\hat{\Gamma}_1(\dots(\hat{\Gamma}_{n-1}(\sedge)))+u\hat{\Gamma}_1(\dots(\hat{\Gamma}_{n-1}(\stadpole)))\Big]\\
=&u^{n-1}\Big(2\sedge\circ_1(\hat{\Gamma}_1(\dots(\hat{\Gamma}_{n-1}(\sedge))))
-4\hat{\Gamma}_1(\dots(\hat{\Gamma}_{n-1}(
\begin{tikzpicture}[baseline=-0.5ex]
\node[ext](v) at(0,0) {};
\node[ext](w) at(.5,0) {};
\node[ext](z) at(1,0) {};
\draw (v) edge [out=45, in=135] (w);
\draw (w) edge [out=45, in=135] (z);
\end{tikzpicture}
)))\Big)\\
&+u^n\Big(2\sedge\circ_1(\hat{\Gamma}_1(\dots(\hat{\Gamma}_{n-1}(\stadpole))))
-2\hat{\Gamma}_1(\dots(\hat{\Gamma}_{n-1}(
\begin{tikzpicture}[baseline=-0.5ex]
\node[ext](v) at(0,0) {};
\node[ext](w) at(.5,0) {};
\draw (v) edge [out=45, in=135] (w);
\draw (v) to [ out=120, in=60, loop] (v);
\end{tikzpicture}
)))\Big)
\end{aligned}
\]
\[
\begin{aligned}
CC:=&u^{n-1}\Big[\dedge,\hat{\Gamma}_1(\dots(\hat{\Gamma}_{n-1}(\sedge)))+u\hat{\Gamma}_1(\dots(\hat{\Gamma}_{n-1}(\stadpole)))\Big]\\
=&u^{n-1}\Big(2\dedge\circ_1(\hat{\Gamma}_1(\dots(\hat{\Gamma}_{n-1}(\sedge))))
-4\hat{\Gamma}_1(\dots(\hat{\Gamma}_{n-1}(
\begin{tikzpicture}[baseline=-0.5ex]
\node[ext](v) at(0,0) {};
\node[ext](w) at(.5,0) {};
\node[ext](z) at(1,0) {};
\draw (v) edge [out=45, in=135] (w);
\draw (w) edge [out=-45, in=-135, dashed] (z);
\end{tikzpicture}
)))\Big)\\
&+u^n\Big(2\dedge\circ_1(\hat{\Gamma}_1(\dots(\hat{\Gamma}_{n-1}(\stadpole))))
-2\hat{\Gamma}_1(\dots(\hat{\Gamma}_{n-1}(
\begin{tikzpicture}[baseline=-0.5ex]
\node[ext](v) at(0,0) {};
\node[ext](w) at(.5,0) {};
\draw (v) edge [out=-45, in=-135, dashed] (w);
\draw (v) to [ out=120, in=60, loop] (v);
\end{tikzpicture}
)))
-2\hat{\Gamma}_1(\dots(\hat{\Gamma}_{n-1}(
\begin{tikzpicture}[baseline=-0.5ex]
\node[ext](v) at(0,0) {};
\node[ext](w) at(.5,0) {};
\draw (v) edge [out=45, in=135] (w);
\draw (v) edge [out=-45, in=-135, dashed] (w);
\end{tikzpicture}
)))
\Big)\end{aligned}
\]
\[
\begin{aligned}
DD:&=u^{n}\Big[\stadpole,\hat{\Gamma}_1(\dots(\hat{\Gamma}_{n-1}(\sedge)))+u\hat{\Gamma}_1(\dots(\hat{\Gamma}_{n-1}(\stadpole)))\Big]\\
&=u^{n}\sum_{j=1}^{n-1}\Big(\hat{\Gamma}_1(\dots(\Big[\stadpole,\hat{\Gamma}_j\Big](\dots(\hat{\Gamma}_{n-1}(\sedge)))))+
\hat{\Gamma}_1(\dots(\Big[\stadpole,\hat{\Gamma}_j\Big](\dots(\hat{\Gamma}_{n-1}(\stadpole)))))\Big)
\end{aligned}
\]
\[
\begin{aligned}
EE:&=u^{n}\Big[\dtadpole,\hat{\Gamma}_1(\dots(\hat{\Gamma}_{n-1}(\sedge)))+u\hat{\Gamma}_1(\dots(\hat{\Gamma}_{n-1}(\stadpole)))\Big]\\
&=2u^{n}\hat{\Gamma}_1(\dots(\hat{\Gamma}_{n-1}(
\begin{tikzpicture}[baseline=-0.5ex]
\node[ext](v) at(0,0) {};
\node[ext](w) at(.5,0) {};
\draw (v) edge [out=45, in=135] (w);
\draw (v) edge [out=-45, in=-135, dashed] (w);
\end{tikzpicture}
))).
\end{aligned}
\]
Again in the calculation for AA we used equation \ref{diff_dist}, whereas for DD equation \ref{loop_dist} was applied.
The terms $AA+DD$ cancel again the left hand side of equation \ref{eq_exclass}:
\[
\sum_{j=1}^{n-1} F_n(\sedge\Gamma_1\dots \tilde{d}\Gamma_j\dots\Gamma_{n-1}) = AA + DD.
\]

Following equation \ref{d2} the second term on the right hand side
\[
2\sum_{p,q;p+q=n-1;q\neq0}\sum_{\tau \in sh(p,q)}d^2(F_{p+1}(\sedge\Gamma_{\tau(1)}\dots\Gamma_{\tau(p)})\otimes F_q(\Gamma_{\tau(p+1)}\dots\Gamma_{\tau(n-1)}))=FF+GG
\]
has the following contributions
\begin{align*}
FF:&=2u^{n-1}\sum_{p,q;p+q=n-1;q\neq0}\sum_{\tau \in sh(p,q)}
\sedge\Big(\hat{\Gamma}_{\tau(1)}(\dots(\hat{\Gamma}_{\tau(p)}(\sedge)))+u\hat{\Gamma}_{\tau(1)}(\dots(\hat{\Gamma}_{\tau(p)}(\stadpole))),\\
&\hat{\Gamma}_{\tau(p+1)}(\dots(\hat{\Gamma}_{\tau(n-1)}(\extnode)))\Big)\\
&=-BB
\end{align*}
\begin{align*}
GG:&=2u^{n-1}\sum_{p,q;p+q=n-1;q\neq0}\sum_{\tau \in sh(p,q)}
\dedge\Big(\hat{\Gamma}_{\tau(1)}(\dots(\hat{\Gamma}_{\tau(p)}(\sedge)))+u\hat{\Gamma}_{\tau(1)}(\dots(\hat{\Gamma}_{\tau(p)}(\stadpole))),\\
&\hat{\Gamma}_{\tau(p+1)}(\dots(\hat{\Gamma}_{\tau(n-1)}(\extnode)))\Big)\\
&=-(CC+EE)
\end{align*}
which cancel the terms $BB + CC +EE$ due to equation \ref{pull_edge_inside}.
\end{proof}

Using the fact that the map $F$ is a map of complexes we can prove the main result of this paper:
\begin{thm}
The map
\[
\left(S^{+}\left(GC_{2n,conn}^{\geq2} \oplus \sedge\right)\left[[u]\right],\tilde{d}\right)
\xrightarrow[F]{}
(Def,d)
\]
is a quasi-isomorphism.
\label{quasiiso}
\end{thm}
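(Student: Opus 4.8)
The plan is to prove the theorem by deriving it from Willwacher's Theorem~1.3 in \cite{Willwacher2015} via a filtration argument, making essential use of the fact, established in Lemma~\ref{map_complexes}, that $F$ is a morphism of cofree cocommutative coalgebras.

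First I would put on both $\left(S^{+}(U)[[u]],\tilde d\right)$ and $(Def,d)$ the filtration by the number of connected components of the graphs, refined by the power of $u$ so that it becomes complete, Hausdorff and exhaustive — concretely, by the weight ``number of connected components plus power of $u$''. A coderivation of a cofree cocommutative coalgebra never increases the number of components; the $u$-dependent terms of $\tilde d$ and of $d$ in \eqref{d_tilde} and \eqref{total_differential} raise the power of $u$; and $F$, being a coalgebra map whose $n$-th corestriction $F_{n}$ carries the prefactor $u^{n-1}$ by the formulas following \eqref{f1}, respects this filtration. By the comparison theorem for the associated (convergent) spectral sequences, it then suffices to show that the induced map on associated gradeds is a quasi-isomorphism. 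On the associated graded the differentials act diagonally on each symmetric power through their component-preserving (``linear'') parts, and the induced coalgebra map is $S^{+}(F_{1}^{\mathrm{lin}})$, generated by the linear ``add a hair'' map $F_{1}^{\mathrm{lin}}\colon\left(GC_{2n,conn}^{\geq2}\oplus\sedge\right)[[u]]\to V[[u]]$ of \eqref{f1}. Since we work over $\mathbb{R}$, the functor $S^{+}$ preserves quasi-isomorphisms, so everything reduces to proving that $F_{1}^{\mathrm{lin}}$ is a quasi-isomorphism.

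To compute the cohomology of the target of $F_{1}^{\mathrm{lin}}$ — a single connected component of $Def$ with linearized differential — I would filter once more, by the loop order (first Betti number) of the solid-and-dashed graph, and invoke the dg-$\mathbb{S}$-module decomposition $(BVGraphs_{2n},\delta)\cong(Graphs_{2n},\delta)\circ S(\Delta)/(\Delta^{2})$ of \eqref{delta_no_tadpoles}, which isolates the tadpoles into the two-dimensional external factor $S(\Delta)/(\Delta^{2})$ and the variable $u$ into the Koszul dual coefficients $e_{2n}\{-2n\}$. In each loop order the resulting associated graded should become, up to these two factors, the corresponding piece of the deformation complex $\Def(e_{2n,\infty}\to Graphs_{2n})[1]\simeq\Der(e_{2n,\infty})$, on which ``adding a hair'' is exactly the quasi-isomorphism of Willwacher's Theorem~1.3, resting on his Proposition~3.9 that $e_{2n}\hookrightarrow Graphs_{2n}$ is a quasi-isomorphism. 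The $S(\Delta)/(\Delta^{2})$- and $\mathbb{R}[[u]]$-factors would then be carried through formally by a Künneth argument, producing precisely the $\mathbb{R}[[u]]$-coefficients in $H(Der(BV_{2n,\infty}))$ and, together with the cogenerator $\sedge$, the extra class $\mathbb{R}[-2n-1]$.

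The hard part will be this last identification, namely verifying that the linearized differential is compatible with the decomposition $BVGraphs_{2n}\cong Graphs_{2n}\circ S(\Delta)/(\Delta^{2})$ after passing to the loop-order associated graded. This is delicate because the tadpole- and $u$-dependent pieces $\tfrac12 u[\stadpole,\cdot]$ and $\tfrac12 u[\dtadpole,\cdot]$ of \eqref{total_differential} genuinely move and annihilate tadpoles, so one must check that on the associated graded they affect only the internal differential of the external $S(\Delta)/(\Delta^{2})$-factor — whose homology is again $S(\Delta)/(\Delta^{2})$ by \eqref{delta_no_tadpoles} — and create no new homology in $V[[u]]$; one must also be careful that the component-merging terms of $\tilde d$ and $d$ interact correctly with the chosen filtration. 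Granting these points, $F_{1}^{\mathrm{lin}}$ is a quasi-isomorphism, and by the spectral-sequence comparison of the first step so is $F$. The remaining ingredients — setting up the coalgebra filtration, identifying the associated graded differentials, and the characteristic-zero Künneth isomorphism for $S^{+}$ — are routine.
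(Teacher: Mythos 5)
Your overall strategy---use Lemma \ref{map_complexes}, filter, pass to the associated graded and quote Willwacher's Theorem~1.3---is the same as the paper's, but the filtration you choose does not do what you claim, and the reduction to $S^{+}(F_1^{\mathrm{lin}})$ that your whole argument rests on fails for it. The component-merging part of the differential $d$ is $d^2(\Gamma_1\otimes\Gamma_2)=u\bigl(\sedge\circ(\Gamma_1,\Gamma_2)+\dedge\circ(\Gamma_1,\Gamma_2)\bigr)$ (equation \ref{d2}): it lowers the number of connected components by one while raising the power of $u$ by one, so it preserves your weight ``number of components plus power of $u$'' exactly. Likewise the corestriction $F_n$ carries the prefactor $u^{n-1}$ and turns $n$ components into one, so every term of the coalgebra map $F$ preserves that weight as well. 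Consequently, on your associated graded neither the merging terms of $d$ nor the higher $F_n$ disappear: the induced differential is not the component-preserving (``linear'') part, the induced map is not $S^{+}(F_1^{\mathrm{lin}})$, and the graded problem is essentially as large as the original one (only the weight-raising terms $\tfrac{u}{2}[\stadpole,\cdot]$ and $\tfrac{u}{2}[\dtadpole,\cdot]$ drop out). This is a genuine gap, not a routine verification.

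The paper filters by the power of $u$ alone (a bounded-above, complete descending filtration). Then every $u$-carrying term---in particular the merging part $d^2$, all $F_n$ with $n\geq 2$, the tadpole terms of $d$, and the $u$-part of $\tilde d$---strictly raises the filtration degree and vanishes in the associated graded, which in each $u$-power is the map
\[
\left(S^{+}\left(GC_{2n,conn}^{\geq2}\oplus\sedge\right),\tilde{\delta}\right)\longrightarrow\left(\gr^{p}Def,\ \delta+\Big[\dedge,\cdot\Big]+\Big[\sedge,\cdot\Big]\right),
\]
i.e.\ precisely the hairy quasi-isomorphism of Willwacher's Theorem~1.3; the spectral sequence comparison then finishes the proof. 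So the fix is simply to drop the component count from your weight: with the $u$-filtration you obtain the reduction to the one-hair map that you were aiming for, essentially for free. Your proposed second step (a further filtration by loop order and the decomposition $BVGraphs_{2n}\cong Graphs_{2n}\circ S(\Delta)/(\Delta^{2})$ to isolate tadpoles) is not part of the paper's argument, which disposes of the associated graded by a direct appeal to Willwacher's theorem; in particular your worry about the terms $\tfrac{u}{2}[\stadpole,\cdot]$ and $\tfrac{u}{2}[\dtadpole,\cdot]$ acting on the associated graded is moot under the correct filtration, since these terms are already filtered away.
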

Here, $Def$ refers to the considered deformation complex in the form of equation \ref{def} and \ref{def_s+} and $d$ to the total differential defined in equation \ref{total_differential}. Finally, $\tilde{d}$ is defined in equation \ref{d_tilde}.

\begin{proof}
By lemma \ref{map_complexes} $F$ is a map of complexes.
Let us consider the bounded above complete descending filtration with respect to the power $p$ of $u$. The map $F$ is compatible with the filtration and the restriction to the associated graded vector spaces
\[
\left(u^pS^{+}\left(GC_{2n,conn}^{\geq2} \oplus \sedge\right),\tilde{\delta}\right)\rightarrow\left(gr^p Def, 
\delta + 
\Big[\dedge,\cdot \Big]
+
\Big[\sedge,\cdot\Big] \right)
\]
is a quasi-isomorphism by theorem 1.3 in \cite{Willwacher2015}.
Hence, the map F is a quasi-isomorphism, too.
\end{proof}

\subsection{Cohomology of the homotopy derivations of the $BV_{2n}$ operads}

From theorem \ref{quasiiso} we can deduce that the cohomology of the homotopy derivations of the $BV_{2n}$ operads are given by
\[
H(Der(BV_{2n,\infty})) = H\left(S^{+}\left(GC_{2n,conn}^{\geq2} \oplus \sedge\right)\left[[u]\right],\tilde{d}\right).
\]

Let us consider the tensor product over the ring $\mathbb{R}[[u]]$. We will denote the completed symmetric product space of a vector space $V$ with respect to the tensor product over $\mathbb{R}[[u]]$ by
\[
S^+_{\mathbb{R}[[u]]}(V).
\]
Note that the differential $\tilde{d}$ as defined in equation \ref{d_tilde} acts as coderivation on the coproduct of connected graphs. It does neither split connected graphs nor does it connect two connected components. Hence, we can apply the Künneth formula as well as the fact that taking comohology interchanges with taking invariants with respect to the symmetric group $\mathbb{S}_n$. Therefore, we can pull the cohomology inside the completed symmetric product. Since wie consider the completed symmetric product with respect to $\mathbb{R}[[u]]$ also the decoration with power series in $u$ can be interchanged with the product. Finally note that the extra class $\sedge$ is exact due to the odd symmetry of interchanging two edges but not closed under the differential $\tilde{d}$. Summarised we can write the homotopy derivations of the $BV_{2n}$ operads in the following form
\begin{equation}
\begin{aligned}
H(Der(BV_{2n,\infty})) &= H\left(S^{+}\left(GC_{2n,conn}^{\geq2} \oplus \sedge\right)\left[[u]\right],\tilde{d}\right)\\
&= S^{+}_{\mathbb{R}[[u]]}\left(\left(H\left(GC_{2n,conn}^{\geq2},\tilde{d}\right) \oplus \sedge \right)[[u]]\right).
\end{aligned}
\label{coho_hom_der_bv}
\end{equation}

\subsection{Zeroth cohomology isomorphic to $\mathfrak{grt}$}

As a corollary of theorem \ref{quasiiso} we can extend theorem 1.2 in \cite{Willwacher2015} by Willwacher and deduce that the cohomology of the homotopy derivations of the $BV_2$ operad is isomorphic to the Grothendieck-Teichm\"uller Lie algebra plus one class.
\begin{thm}
\[
H^0(Der(BV_{2,\infty}))\cong  \mathfrak{grt} := \mathfrak{grt_1}\rtimes\mathbb{R}
\]
where $\mathbb{R}$ acts on $\mathfrak{grt_1}$ by multiplication with the degree with respect to the grading on $\mathfrak{grt_1}$.
\label{zerocoho}
\end{thm}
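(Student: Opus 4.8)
The plan is to read $H^0$ off the explicit description of the full cohomology supplied by Theorem~\ref{quasiiso} (equivalently \eqref{coho_hom_der_bv}), and then to pin down the Lie bracket by reducing to Willwacher's computation for $e_2$. Specialising the formula of the introduction to $n=1$ gives
\[
H(\Der(BV_{2,\infty}))\;\cong\;S^{+}_{\mathbb{R}[[u]]}\!\left(\bigl(H(GC_{2,conn}^{\geq2})[-3]\oplus\mathbb{R}[-3]\bigr)[[u]]\right)[3],
\]
with $|u|=2$. Hence a class realised as a $k$-fold symmetric product of cogenerators of internal degrees $d_1,\dots,d_k$ carrying total power $u^p$ lies in degree $\sum_i d_i+3(k-1)+2p$ of $\Der(BV_{2,\infty})$. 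Since $H(GC_{2,conn}^{\geq2})$ is concentrated in non-negative degrees (the relevant input of Willwacher, \cite{Willwacher2015}) and the extra rank-one summand sits in internal degree $0$, all $d_i\ge 0$, so this expression vanishes only for $k=1$, $p=0$, $d_1=0$. Thus $H^0(\Der(BV_{2,\infty}))\cong H^0(GC_{2,conn}^{\geq2})\oplus\mathbb{R}$ as graded vector spaces, and by Willwacher's theorem \cite{Willwacher2015} one has $H^0(GC_2)\cong\mathfrak{grt}_1$, with bivalent vertices adding only the strictly positive-degree loop classes, so $H^0(GC_{2,conn}^{\geq2})=H^0(GC_{2,conn})\cong\mathfrak{grt}_1$. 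This is exactly Willwacher's degree count for $H^0(\Der(e_{2,\infty}))$; the only new feature is the positive-degree generator $u$, which can only strengthen it. So far this shows $H^0(\Der(BV_{2,\infty}))\cong\mathfrak{grt}_1\oplus\mathbb{R}$ as graded vector spaces.

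It remains to recover the Lie bracket, which is delicate because the quasi-isomorphism $F$ of Theorem~\ref{quasiiso} is only a morphism of dg coalgebras. Here I would use the $u$-adic filtration of $(Def,d)$: it is exhaustive, complete and bounded above, and $[u^aDef,u^bDef]\subseteq u^{a+b}Def$, so it is a filtration by dg Lie ideals whose (convergent, multiplicative) spectral sequence has $E_1=H(gr^0 Def)[[u]]$ as a graded Lie algebra. Now $gr^0 Def=(Def/uDef,\ \delta+[\dedge,\cdot]+[\sedge,\cdot])$ is the deformation complex computing $\Der(e_{2,\infty})$ — this is precisely the identification already used in the proof of Theorem~\ref{quasiiso} via \cite[Theorem~1.3]{Willwacher2015} — and the quotient $q\colon Def\twoheadrightarrow gr^0 Def$ is a morphism of dg Lie algebras. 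In total degree $0$ the term $u^pH^q(\Der(e_{2,\infty}))$ is nonzero only for $p=q=0$ (using $H^{<0}(\Der(e_{2,\infty}))=0$, again by the same degree count), and for the same reason no differential $d_r$ enters or leaves that spot; hence the degree-$0$ part degenerates at $E_1$, and $q$ induces an isomorphism of Lie algebras $H^0(\Der(BV_{2,\infty}))\qiso H^0(\Der(e_{2,\infty}))$. By Willwacher's Theorem~1.2 \cite{Willwacher2015} the target is $\mathfrak{grt}_1\rtimes\mathbb{R}$, with $\mathbb{R}$ acting through the grading derivation on $\mathfrak{grt}_1$; transporting this along $q^{-1}$ yields $H^0(\Der(BV_{2,\infty}))\cong\mathfrak{grt}=\mathfrak{grt}_1\rtimes\mathbb{R}$.

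The step I expect to be the genuine obstacle is this last one: checking that $gr^0 Def$ is quasi-isomorphic \emph{as a dg Lie algebra} (not merely as a complex) to $\Der(e_{2,\infty})$, so that the $\mathfrak{grt}_1$-bracket and the $\mathbb{R}$-action are carried across faithfully, and confirming that the tadpole- and $u$-decorations ($\stadpole$, $\dtadpole$, $\Delta$) genuinely cannot perturb the degree-$0$ cohomology or its bracket. Both points should follow from the degree bounds established above, since any potential correction term carries strictly positive degree; making this completely precise is where the work lies.
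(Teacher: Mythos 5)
Your skeleton (read $H^0$ off Theorem~\ref{quasiiso}, split off the bivalent/loop classes, import Willwacher's $e_2$ results, and organise the $u$-dependence by the $u$-adic filtration, which is also how Theorem~\ref{quasiiso} itself is proved) is the same as the paper's, but there is a genuine gap exactly at the point you yourself flag as the expected obstacle, and it cannot be closed by the inputs you cite. The paper's proof of Theorem~\ref{zerocoho} rests on the theorem of Merkulov and Willwacher \cite{MerkulovWillwacher2014} that $H^0\left(GC_{2,conn}[[u]],\tilde d\right)\cong\mathfrak{grt}_1$, i.e.\ that the $u$-linear tadpole term in $\tilde d$ (equation~\ref{d_tilde}) changes nothing in degree zero. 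You replace this input by a claimed degeneration of the $u$-adic spectral sequence in total degree $0$, justified by the assertion that $H\left(GC_{2,conn}^{\geq2}\right)$ (equivalently $H(\Der(e_{2,\infty}))$) is concentrated in non-negative degrees, which you call ``the relevant input of Willwacher''. That assertion is not in \cite{Willwacher2015}: what is proved there is $H^0(GC_{2,conn})\cong\mathfrak{grt}_1$ together with vanishing in degree $-1$; vanishing of $H(GC_{2})$ in \emph{all} negative degrees is not available. (Your first paragraph also quietly reads the formula of the introduction with the undeformed cohomology $H(GC_{2,conn}^{\geq2})$, whereas equation~\ref{coho_hom_der_bv} involves the $\tilde d$-cohomology; the spectral-sequence paragraph is meant to repair this, so the whole weight falls on the degeneration claim.)

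Concretely, without negative-degree vanishing the groups you need to kill are uncontrolled: the extra $E_1$-contributions $u^{p}H^{-2p}(\Der(e_{2,\infty}))$, $p\geq1$, sitting in total degree $0$, and the targets $u^{r}H^{1-2r}(\Der(e_{2,\infty}))$, $r\geq1$, of the differentials leaving the $p=0$ spot (for $r=1$ this is $H^{-1}$, which is fine, but already $r=2$ requires $H^{-3}=0$, and injectivity on $E_\infty$ needs $H^{-2},H^{-4},\dots$). These are precisely the obstruction and ambiguity spaces for lifting a $\tilde\delta$-cocycle $\gamma_0$ representing a $\mathfrak{grt}_1$-class to a $\tilde d$-cocycle $\gamma_0+u\gamma_1+u^2\gamma_2+\cdots$, so what you are implicitly asserting is the Merkulov--Willwacher theorem itself, which your proposal neither cites nor reproves. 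As a result neither the vector-space identification of $H^0$ nor the transport of the bracket is established as written. (Two smaller remarks: your treatment of the Lie structure via the $u$-linear dg Lie quotient $q\colon Def\to Def/uDef$ is a nice addition --- the paper is silent on the bracket --- but it is contingent on the same missing input; and your identification of $\mathrm{gr}^0 Def$ with the complex computing $\Der(e_{2,\infty})$ inherits the tadpole/$BVGraphs$-versus-$Graphs$ glossing already present in the paper's proof of Theorem~\ref{quasiiso}, so I do not count it against you.) The direct repair is to do what the paper does: quote \cite{MerkulovWillwacher2014} for $H^0\left(GC_{2,conn}[[u]],\tilde d\right)\cong\mathfrak{grt}_1$, combine it with the loop-class splitting and the degree count, and only then conclude.
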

\begin{proof}
Willwacher proved that the zeroth cohomology of the graph complex $GC_2$, considered as Lie algebra, is isomorphic to the Grothendieck-Teichm\"uller Lie algebra
\[
H^0(GC_{2,conn}) \cong \mathfrak{grt_1},
\]
\cite[Theorem 1.1]{Willwacher2015}, and deduced that 
\[
H^0(Der(e_{2,\infty}))\cong  \mathfrak{grt},
\]
\cite[Theorem 1.2]{Willwacher2015}.

Furthermore, Merkulov and Willwacher showed in \cite{MerkulovWillwacher2014} that 
\[
H^0\left(GC_{2,conn}[[u]],\tilde{d}\right) \cong \mathfrak{grt_1}
\]
where the differential $\tilde{d}$ is defined in equation \ref{d_tilde}. 

By proposition 3.4 in \cite{Willwacher2015} we have 
\[
H\left(GC^{\geq 2}_{2,conn}\right) = H(GC_{2,conn}) \oplus \bigoplus_{\stackanchor{$j\geq 3$}{ $j\equiv 3$ mod $4$}} \mathbb{R}[2-j].
\]
Here, the class $\mathbb{R}[2n-j]$ is represented by a loop with $j$ edges. The differential $\tilde{d}$ does neither split nor glue different connected components and oops are exact but not closed under it. Hence we also have
\[
H\left(GC^{\geq 2}_{2,conn},\tilde{d}\right) = H\left(GC_{2,conn},\tilde{d}\right) \oplus \bigoplus_{\stackanchor{$j\geq 3$}{ $j\equiv 3$ mod $4$}} \mathbb{R}[2-j].
\]

Note that the cohomological degree of a connected component corresponds to 2 (\#vertices - 1) - \#edges, and that $u$ is an even variable with degree 2.

The theorem follows form equation \ref{zerocoho} if we consider the cohomology in degree $0$.
\end{proof}

\section{Acknowledgement}
The Author would like to thank Prof. T. Willwacher for proposing the problem question, giving inputs and ideas as well as for his kind advice and support. Furthermore, the author was partially supported by the ERC grant GRAPHCPX (678156), awarded to Prof. T. Willwacher.

\printbibliography

\end{document}